\documentclass[a4paper,12pt]{amsart}
\usepackage[utf8x]{inputenc}
\usepackage[english]{babel}
\usepackage{setspace,amsmath,amssymb,amscd,amsthm,amsfonts}
\usepackage{hyperref}
\usepackage[left=2.5cm,right=2.5cm,top=2.5cm,bottom=2.5cm]{geometry}

\usepackage{import, xifthen, pdfpages, transparent}
\newcommand{%
	\def\svgscale{1}
	\import{./}{.pdf_tex}
}[1]{%
	\def\svgscale{1}
	\import{./}{#1.pdf_tex}
}

\newcommand{\DD}{\Delta\!\!\!\Delta}

\newcommand{\K}{\mathcal K}
\newcommand{\RR}{\mathbb R}
\newcommand{\CC}{\mathbb C}
\newcommand{\ZZ}{\mathbb Z}

\newcommand{\kk}{\mathbf{k}}
\renewcommand{\le}{\leqslant}

\newcommand{\Z}{\mathcal Z}
\newcommand{\R}{\mathcal R}

\newtheorem{formula}{}[section]
\newtheorem{corollary}[formula]{Corollary}
\newtheorem{lemma}[formula]{Lemma}
\newtheorem{theorem}[formula]{Theorem}
\newtheorem{proposition}[formula]{Proposition}
\theoremstyle{definition}

\newtheorem{example}[formula]{Example}

\theoremstyle{remark}
\newtheorem*{remark}{Remark}
\numberwithin{equation}{section}

\DeclareMathOperator{\link}{link}

\DeclareMathOperator{\sk}{sk}

\DeclareMathOperator{\Tor}{Tor}

\begin{document}
\title[Connection between coordinate and diagonal arrangments]{On the connection between coordinate and diagonal arrangement complements}
\author{Vsevolod Tril}
\thanks{The author is supported by a stipend from the Theoretical Physics and Mathematics Advancement Foundation ``BASIS''}
\address{Department of Mathematics and Mechanics, Moscow State University, Russia}
\email{\href{mailto:vsevolod.tril@math.msu.ru}{vsevolod.tril@math.msu.ru}}

\subjclass[2020]{13F55, 14N20, 55P10, 55P40, 57S12} 
%57S12 Toric topology
%14N20 Configurations and arrangements of linear subspaces
%55P10 Homotopy equivalences in algebraic topology
%55P40 Suspensions
%13F55 Commutative rings defined by monomial ideals; Stanley-Reisner face rings; simplicial complexes

\begin{abstract}
	We study diagonal arrangement complements $D(\K)$ in $\CC^m$. We consider the class of simplicial complexes $\K$ in which any two missing faces
	have a common vertex, and prove that the coordinate arrangement complement $U(\K)$ is the double suspension of the diagonal arrangement complement $D(\K)$.
	In the case of subspace arrangements in $\RR^m$ the coordinate arrangement complement $U_{\RR}(\K)$ is the single suspension of $D_{\RR}(\K)$.
\end{abstract}
\maketitle
\section{Introduction}
Subspace arrangements and their complements play an important role in many constructions of combinatorics, 
algebraic and symplectic geometry, and in mechanical systems.
In the work of Arnold \cite{Arn69}, the complement to the arrangement of complex hyperplanes $\{ z_i = z_j \}$ was introduced 
and showed to be the Eilenberg--Mac~Lane space of the pure braid group. 
The cohomology ring of this space was also described in Arnold's work.

The following two classes of arrangements are of particular interest. 
The first class is the coordinate subspace arrangements, which is thoroughly studied in toric topology. 
The complements of coordinate subspace arrangements in $\CC^m$ bijectively correspond to simplicial complexes on the set $[m]$. 
The complement of the coordinate subspace arrangement corresponding to a simplicial complex $\K$ is denoted by $U(\K)$.
In the work \cite{BP00} it was proved that $U(\K)$ is homotopy equivalent to a moment-angle complex $\Z_\K$ 
and using this fact the cohomology ring of coordinate arrangement complements was described. 
The homotopy type of these spaces was explicitly described for some classes of simplicial complexes, 
such as stacked polytopes and shifted complexes, see \cite[\S~4.7 and Chapter~8]{BP}.

Another interesting class is the diagonal arrangements. Their complements were studied in \cite{PRW},\cite{BP00}, \cite{Dobr}. 
In the work \cite{PRW} cohomology groups of real diagonal arrangement
complements were computed via bar-construction of the Stanley--Reisner ring.
Based on these results a connection between cohomology groups of diagonal arrangements and loop spaces on
the polyhedral products was found in \cite{BP00} and developed in \cite{Dobr}.

We study diagonal arrangements in $\CC^m$. The complement of such an arrangement corresponds to a simplicial complex $\K$ on $[m]$ and is denoted by $D(\K)$,
see details in Section~2. We prove the following.
\begin{theorem}[Theorem~\ref{res2}]\label{intro-thrm-1}
	Suppose that any two missing faces of a simplicial complex $\K$ have a common vertex. Then there is a homotopy equivalence
	$$U(\K) \simeq \Sigma^2 D(\K).$$ 
	In case of real arrangement complements there is a homotopy equivalence
	$$U_{\RR}(\K) \simeq \Sigma D_{\RR}(\K).$$
\end{theorem}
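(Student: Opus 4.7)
The plan is to identify the intersection posets of the two arrangements using the hypothesis and then invoke a homotopy decomposition for complements of central subspace arrangements to read off the suspension relationship; as a more concrete alternative, one can exhibit an explicit join map.

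Let $\sigma_1,\dots,\sigma_r$ be the missing faces of $\K$, and write $V_\sigma=\{z\in\CC^m:z_i=0,\,i\in\sigma\}$ and $\Delta_\sigma=\{z\in\CC^m:z_i=z_j,\,i,j\in\sigma\}$, so that $U(\K)=\CC^m\setminus\bigcup V_{\sigma_i}$ and $D(\K)=\CC^m\setminus\bigcup\Delta_{\sigma_i}$, both central arrangements. First I would establish the combinatorial lemma that for every non-empty $I\subseteq\{1,\dots,r\}$,
\[
\bigcap_{i\in I}V_{\sigma_i}=V_T,\qquad \bigcap_{i\in I}\Delta_{\sigma_i}=\Delta_T,\qquad T=\bigcup_{i\in I}\sigma_i.
\]
The first identity is automatic. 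The second uses the hypothesis essentially: for $|I|=2$, any vertex $v\in\sigma_i\cap\sigma_j$ forces $z_v$ to be the common value on both $\sigma_i$ and $\sigma_j$, so $\Delta_{\sigma_i}\cap\Delta_{\sigma_j}=\Delta_{\sigma_i\cup\sigma_j}$. Induction on $|I|$ (using that $T\cap\sigma_k\neq\emptyset$ for any $k$, since pairwise intersections are non-empty) extends this to arbitrary $I$. Consequently the intersection lattices of the two arrangements are canonically isomorphic to the lattice of non-empty unions of missing faces (with $\hat 0=\CC^m$ adjoined), while the complex codimensions $\mathrm{codim}_\CC V_T=|T|$ and $\mathrm{codim}_\CC\Delta_T=|T|-1$ differ uniformly by $1$.

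Next, I would convert this combinatorial coincidence into a homotopy equivalence by invoking a Ziegler--\v{Z}ivaljevi\'c--type decomposition, expressing the complement of a central complex subspace arrangement as a wedge of joins
\(\bigvee_{x\in L\setminus\hat 0}|\Delta(\hat 0,x)|*S^{2\,\mathrm{codim}_\CC(x)-1}\)
depending functorially on the pair (intersection lattice, codimension function). Since our two lattices and their order complexes coincide while every codimension is shifted by $1$, every sphere factor is shifted by $2$, giving $U(\K)\simeq\Sigma^2 D(\K)$. The real case follows identically with spheres of dimension $\mathrm{codim}(x)-1$ and a single suspension.

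The main obstacle is that the decomposition theorem yields the homotopy type only abstractly, without an explicit equivalence, and making it strictly applicable requires care. A more concrete route is the join map
\[
\phi(z,w,s)=(1-s)z+sw\mathbf 1,\qquad z\in D(\K),\;w\in S^1\subset\CC,\;s\in[0,1],
\]
which lands in $U(\K)$---since $\phi|_\sigma=0$ for a missing face $\sigma$ would force $z$ to be constant on $\sigma$ (impossible for $z\in D(\K)$) or $w=0$---and factors through the join $D(\K)*S^1\simeq\Sigma^2 D(\K)$. One verifies that $\phi$ induces the (co)homology isomorphism predicted by Goresky--MacPherson via the identification of intersection lattices above, and concludes by Whitehead's theorem (both sides are simply connected once $\K$ has a missing face of size at least two). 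The real case uses $w\in S^0=\{\pm 1\}$ and produces $\phi\colon\Sigma D_\RR(\K)\to U_\RR(\K)$.
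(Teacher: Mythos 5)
Your combinatorial lemma is correct and matches the key observation in the paper: when any two missing faces share a vertex, intersections of diagonals stay diagonals, the intersection lattices of $\mathcal{CA}(\K)$ and $\mathcal{DA}(\K)$ are canonically identified, and every complex codimension shifts by exactly one. However, both routes you propose to convert this into a homotopy equivalence have serious gaps.

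The first route, invoking a Ziegler--\v{Z}ivaljevi\'c-type \emph{wedge} decomposition of the complement $M(\mathcal{A})$, relies on a theorem that does not exist. The Ziegler--\v{Z}ivaljevi\'c decomposition $\bigvee_{x}\Delta(\hat 0,x)*S^{\dim x}$ describes the one-point compactification of the \emph{union} $|\mathcal A|$; the Goresky--MacPherson formula for the complement is obtained from it only via Alexander duality in $S^n$ at the level of (co)homology groups, not as a homotopy decomposition of $M(\mathcal A)$. In fact, the homotopy type of the complement is \emph{not} functorial in (intersection lattice, codimension function) alone, and the complement is typically not a wedge. The paper's own Example~\ref{D-RP2} (the $6$-vertex triangulation of $\RR P^2$) satisfies the hypothesis of the theorem, yet $H^*(D(\K))$ has a non-trivial product $H^3\otimes H^4\to H^7$; therefore $D(\K)$ cannot admit your proposed wedge decomposition, and the argument collapses.

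The second route, via the join map $\phi(z,w,s)=(1-s)z+sw\mathbf 1$, is a genuinely interesting idea and your check that $\phi$ lands in $U(\K)$ and factors through $D(\K)*S^1\simeq\Sigma^2 D(\K)$ is correct. But the crucial claim---that $\phi$ induces the isomorphism predicted by Goresky--MacPherson on cohomology---is precisely the content of the theorem and is left unargued. The Goresky--MacPherson isomorphism is assembled from the individual summands $H_*(\DD[\perp,u])$, and there is no a priori reason a geometric map like $\phi$ respects that direct-sum structure; verifying compatibility would require a chain-level computation comparable in difficulty to the whole proof. As written, ``one verifies'' is hand-waving the theorem away. (The simple-connectivity check for Whitehead is also glossed over: $\Sigma^2 D(\K)$ is simply connected, and $U(\K)\simeq\Z_\K$ is simply connected because $\K$ has no ghost vertices, but this should be said.)

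For comparison, the paper's proof is both more explicit and more elementary. It constructs a self-homeomorphism $\Phi(z)=z-\widehat f(z)(1,\dots,1)$ of $\CC^m$, where $\widehat f$ is a Tietze extension of a shift function $f$ defined on $|\mathcal{DA}(\K)|$; the common-vertex hypothesis is used exactly to show $f$ is well-defined on overlaps of the diagonals. This $\Phi$ identifies $U(\K)$ with $\CC^m\setminus(\mathcal{DA}(\K)\cap\pi)$, while a linear retraction identifies $D(\K)$ with $\pi\setminus(\mathcal{DA}(\K)\cap\pi)$. Both are then complements in spheres ($S^{2m-1}$ and $S^{2m-3}$ respectively) of the \emph{same} set $\mathcal{DA}(\K)\cap\pi\cap S^{2m-3}$, and Lemma~\ref{sus} ($\Sigma(X\setminus Y)\simeq(\Sigma X)\setminus Y$ for a finite subcomplex $Y\subset X$), applied twice, finishes the argument---with no use of Goresky--MacPherson, no homology computation, and no appeal to Whitehead. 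If you want to salvage your join-map approach, a cleaner path would be to show $\phi$ agrees, up to homotopy, with the composite equivalence coming from $\Phi$ and the sphere-complement identifications, rather than trying to match Goresky--MacPherson generators directly.
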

From this statement and the ring structure of $H^*(U(\K))$ given in \cite{BP} we obtain
\begin{corollary}\label{intro-cor-1}
	Suppose that any two missing faces of a simplicial complex $\K$ have a common vertex. Then $\K$ is a Golod simplicial complex over any ring $\kk$, that is,
	all products and higher Massey products in $\Tor^*_{\kk[v_1, \dots, v_m]}(\kk[\K], \kk)$ are trivial.
\end{corollary}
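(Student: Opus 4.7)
The plan is to convert the claimed algebraic vanishing into a topological statement about $U(\K)$ and then exploit Theorem~\ref{intro-thrm-1}. Recall the Buchstaber--Panov theorem cited in the introduction, which gives an isomorphism of graded algebras
\[
H^*(U(\K); \kk) \cong \Tor^*_{\kk[v_1, \dots, v_m]}(\kk[\K], \kk)
\]
valid over any coefficient ring $\kk$; moreover this isomorphism is compatible with the $A_\infty$-structure on each side, so it identifies higher Massey products as well. Hence to establish Golodness of $\K$ over $\kk$ it suffices to prove that $H^*(U(\K); \kk)$ has trivial cup products and trivial higher Massey products.

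By Theorem~\ref{intro-thrm-1}, $U(\K) \simeq \Sigma^2 D(\K)$ is a (double) suspension, and in particular a co-$H$-space. The null-homotopy of the reduced diagonal $\bar{\Delta}\colon U(\K) \to U(\K) \wedge U(\K)$ on a co-$H$-space immediately implies the vanishing of the cup product on $\tilde{H}^*(U(\K); \kk)$, and hence the triviality of ordinary products in the Tor algebra.

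For higher Massey products the argument is more delicate, since these invariants can be non-zero even when cup products vanish; the Borromean rings complement is the classical illustration. The additional input is the well-known principle in toric topology that if the moment-angle complex $\Z_\K \simeq U(\K)$ is homotopy equivalent to a suspension then $\K$ is Golod (see for example the works of Iriye--Kishimoto and of Grbi\'{c} with co-authors). Conceptually, one upgrades the null-homotopy of $\bar{\Delta}$ to a coherent system of null-homotopies for the iterated reduced diagonals, producing a trivial $A_\infty$-coalgebra structure on the cochains of a suspension; dualising then forces every higher Massey product in cohomology to contain zero. This is the main obstacle in a fully self-contained proof, but invoking it as a black box closes the argument, and combining the two steps yields the desired vanishing in $\Tor^*_{\kk[v_1, \dots, v_m]}(\kk[\K], \kk)$.
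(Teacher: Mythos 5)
Your argument takes essentially the same route as the paper's proof (Proposition~\ref{Gol}): both reduce Golodness to the observation that $\Z_\K\simeq U(\K)\simeq\Sigma^2 D(\K)$ is a double suspension, and then invoke the fact that all cup products and higher Massey products vanish in the cohomology of a suspension. You are more explicit in separating the cup-product step (co-$H$-space kills the reduced diagonal) from the higher-Massey-product step and in flagging the latter as a black box via the Iriye--Kishimoto machinery, whereas the paper states the conclusion tersely, but the substance is identical.
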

Using the results of \cite{IK} we obtain a decomposition of the double suspension of $D(\K)$ for $\K$ satisfying the condition of Theorem~\ref{intro-thrm-1}:
\begin{corollary}\label{intro-cor-2}
	Suppose that any two missing faces of a simplicial complex $\K$ have a common vertex.
    Then there is a homotopy equivalence
	$$\Sigma^2 D(\K) \simeq \bigvee_{\varnothing \neq I \subset [m]} \Sigma^{|I| + 1} |\K_I|.$$
\end{corollary}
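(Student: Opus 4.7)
The plan is to combine Theorem~\ref{intro-thrm-1} with the wedge decomposition of moment-angle complexes established by Iriye--Kishimoto in~\cite{IK}. Under the hypothesis that any two missing faces of $\K$ share a vertex, Theorem~\ref{intro-thrm-1} gives $\Sigma^2 D(\K) \simeq U(\K)$, and by the Buchstaber--Panov equivalence (\cite{BP00}) $U(\K)$ is homotopy equivalent to the moment-angle complex $\Z_\K$. It therefore suffices to decompose $\Z_\K$.

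The main step is to invoke the theorem of~\cite{IK} stating that under this combinatorial hypothesis the moment-angle complex splits as a wedge:
\[
\Z_\K \;\simeq\; \bigvee_{\varnothing \neq I \subset [m]} \Sigma^{|I|+1} |\K_I|.
\]
Chaining the two equivalences produces the statement of the corollary. As a sanity check on the exponents, Hochster's formula
\[
\tilde H^k(\Z_\K) \cong \bigoplus_{\varnothing \neq I \subset [m]} \tilde H^{k-|I|-1}(|\K_I|)
\]
is recovered immediately from this wedge, which pinpoints the suspension degree as $|I|+1$ rather than $|I|+2$ (the latter being what a naive application of a BBCG-style splitting of $\Sigma \Z_\K$ would give; the extra desuspension is the content one gets out of~\cite{IK}).

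The main obstacle I anticipate is matching the precise hypothesis used in~\cite{IK} to the ``any two missing faces share a vertex'' condition stated here. If \cite{IK} proves the decomposition for exactly this class, the proof reduces to a single citation combined with Theorem~\ref{intro-thrm-1}. Otherwise, one needs to check that complexes satisfying the stated condition fall into the class handled by~\cite{IK}---for instance, by verifying that the associated fat-wedge filtration is trivial, or that all higher Whitehead products on the relevant polyhedral product vanish. Given that Corollary~\ref{intro-cor-1} has just established Golodness with trivial products and trivial Massey products for the same class, the required hypothesis of~\cite{IK} should be automatic or very close to it, but this compatibility is the step that warrants careful reading.
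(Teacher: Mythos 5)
Your high-level plan — chain $\Sigma^2 D(\K) \simeq U(\K) \simeq \Z_\K$ and then invoke \cite{IK} to split $\Z_\K$ — is the same as the paper's, and you correctly flag that the remaining work is to see why $\K$ falls into the range of \cite{IK}. However, the route you suggest for closing that gap is the wrong one. You propose to lean on the Golodness established in Corollary~\ref{intro-cor-1} (``the required hypothesis of~\cite{IK} should be automatic or very close to it''), but Golodness of $\K$ (vanishing of products and Massey products in $\Tor$) is known to be strictly weaker than the conditions of Theorem~\ref{Kis}: there exist Golod complexes for which $\Z_\K$ is not a co-$H$-space and the fat wedge filtration is not trivial. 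So Golodness cannot be used to import the \cite{IK} decomposition, and the paper explicitly remarks that the combinatorial class at hand is \emph{not} contained in either of the two classes (homology fillable, $\lceil\dim\K/2\rceil$-neighbourly) that \cite{IK} treats directly.

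The observation you are missing is both simpler and in the opposite logical direction: Theorem~\ref{res2} exhibits $\Z_\K \simeq \Sigma^2 D(\K)$ as a double suspension, and any suspension is automatically a co-$H$-space. Theorem~\ref{Kis} is stated as an \emph{equivalence} between three conditions — (1) the fat wedge filtration of $\Z_\K$ is trivial, (2) $\Z_\K$ is a co-$H$-space, (3) $\Z_\K \simeq \bigvee_{\varnothing\neq I\subset[m]} \Sigma^{|I|+1}|\K_I|$. Condition (2) holds for free because $\Z_\K$ is a suspension, hence (3) follows, which is exactly Corollary~\ref{intro-cor-2}. In fact the logic runs the other way from what you suggest: it is this co-$H$-space property that yields both the decomposition \emph{and} the Golodness of Corollary~\ref{intro-cor-1}, not Golodness that yields the decomposition.
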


The work is organized as follows. Section~2 contains the preliminary definitions and constructions.
In Section~3 we apply the results of \cite{dLS} to calculate the cohomology ring for a class of diagonal arrangement complements. 
In Section~4 we prove that every coordinate arrangement complement is homotopy equivalent to some diagonal arrangement complement and prove the main
theorem. In Section~5 we combine our results with the results of \cite{IK} to obtain Corollary~\ref{intro-cor-1} and Corollary~\ref{intro-cor-2}.
We also give examples of simplicial complexes which satisfy the condition of Theorem~\ref{intro-thrm-1} but are not contained in the classes of simplicial complexes considered in \cite{IK}.

\subsection*{Acknowledgements} I wish to express gratitude to my advisor Taras~Panov for stating the problem, help, support and valuable advice.

\section{Basic definitions}
\textit{An abstract simplicial complex} on the set $V$ is a collection $\K$ of subsets $I \subset V$, called \textit{simplices},
that satisfies the following conditions:
\begin{enumerate}
	\item[$\bullet$] $\varnothing \in \K$,
	\item[$\bullet$] if $I \in \K$ and $J \subset I$ then $J \in \K$,
\end{enumerate}
One-element simplices $\{ i \} \in \K$ are called \textit{vertices}. One-element subsets $\{i \} \in V$ such that $\{i \} \notin \K$ are called \textit{ghost vertices}.

A \textit{missing face} of simplicial complex $\K$ is a subset $I \subset V$ such that $I \notin \K$, but every
proper subset of $I$ is a simplex of $\K$.
The set of all missing faces of $\K$ is denoted by $MF(\K)$.

Usually, $V$ is the set $[m] = \{1, 2, \dots, m \}$.

{\it An arrangement} is a finite set $\mathcal{A} = \{L_1, \dots, L_r \}$ of affine subspaces in some affine space (either real or complex).

Given an arrangement $\mathcal{A} = \{L_1, \dots, L_r\}$ in $\CC^m$, define its union $|\mathcal{A}|$ as
$$|\mathcal{A}| = \bigcup_{i = 1}^r L_i \subset \CC^m,$$
and its complement $M(\mathcal{A})$ as
$$M(\mathcal{A}) = \CC^m \backslash |\mathcal{A}|,$$
and similarly for arrangements in $\RR^m$.

Suppose that $\mathcal{A} = \{L_1, \dots, L_r \}$ is an arrangement in $\CC^m$. 
The intersections
	$$v = L_{i_1} \cap \dots \cap L_{i_k}$$
form a poset $(P, <)$ with respect to the inverse inclusion:
$u < v$ if and only if $v$ is a proper subspace in $u$.
The minimal element $\perp$ of this poset is $\CC^m$, 
and the maximal element $\top$ is $\bigcap_{i = 1}^r L_i$.

This poset is called the {\it intersection poset} of arrangement. 
The intersection poset is a lattice: the join and meet operations are defined by
$$u \vee v = u \cap v, \text{	} u \wedge v = \bigcap_{L: \text{ } u \in L,\text{ } v \in L} L.$$
The \textit{rank function} $d$ on $P$ is defined by
$d(u) = \dim(u)$.

An arrangement 
$\mathcal{A} = \{L_1, \dots, L_r \}$ is called \textit{coordinate}
if every subspace
$L_i$, $i = 1, \dots, r$, is a coordinate subspace.

Every coordinate subspace in $\CC^m$ can be written as
	$$C_I = \{(z_1, \dots, z_m) \in \CC^m : z_{i_1} = \dots = z_{i_k} = 0 \}, $$
where $I = \{i_1, \dots, i_k \}$ is a subset in $[m]$.

For each simplicial complex $\K$ on the set $[m]$ define the \textit{complex coordinate arrangement}
$\mathcal{CA}(\K)$ by
	$$\mathcal{CA}(\K) = \{C_I : I \notin \K \}.$$
Denote the complement of this arrangement by $U(\K)$, that is
	$$U(\K) = \CC^m \backslash \bigcup_{I \notin \K} C_I.$$
The real coordinate arrangement and its complement $U_\RR(\K)$ is defined in the same way.
\begin{proposition}[\cite{BP00}, Proposition 5.2.2]
	The assignment
	$\K \mapsto U(\K)$ defines a one-to-one order preserving correspondence between the set of simplicial complexes on $[m]$
	and the set of coordinate arrangement complements in $\CC^m$ (or $\RR^m$).
\end{proposition}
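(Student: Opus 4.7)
My plan is to exhibit an explicit inverse to the assignment $\K \mapsto U(\K)$ by means of a family of characteristic points of $\CC^m$.

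\textbf{Step 1 (cleaning up the union).} First I would simplify the description of $U(\K)$. Since $J \subseteq I$ implies $C_I \subseteq C_J$, we have
\[
\bigcup_{I \notin \K} C_I \;=\; \bigcup_{I \in MF(\K)} C_I,
\]
and, more generally, any union $\bigcup_k C_{J_k}$ of coordinate subspaces only depends on the set of inclusion-minimal members of $\{J_k\}$. This observation will be the workhorse for both injectivity and surjectivity.

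\textbf{Step 2 (order-preserving).} If $\K_1 \subseteq \K_2$, then $\{I : I \notin \K_2\} \subseteq \{I : I \notin \K_1\}$, so
\[
\bigcup_{I \notin \K_2} C_I \;\subseteq\; \bigcup_{I \notin \K_1} C_I,
\]
which gives $U(\K_1) \subseteq U(\K_2)$.

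\textbf{Step 3 (injectivity via characteristic points).} For each $I \subseteq [m]$ I introduce the point $p_I \in \CC^m$ with coordinates $(p_I)_i = 0$ for $i \in I$ and $(p_I)_i = 1$ for $i \notin I$. A direct check shows $p_I \in C_J$ iff $J \subseteq I$. Consequently
\[
p_I \in U(\K) \iff \text{no } J \notin \K \text{ is contained in } I \iff \text{every } J \subseteq I \text{ lies in } \K \iff I \in \K,
\]
where the last equivalence uses that $\K$ is closed under taking subsets. Hence $\K = \{I \subseteq [m] : p_I \in U(\K)\}$, so $\K$ can be recovered from $U(\K)$ and the assignment is injective.

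\textbf{Step 4 (surjectivity).} Given any coordinate arrangement $\mathcal{A} = \{C_{J_1}, \dots, C_{J_r}\}$ (we may discard $C_\varnothing = \CC^m$ if it appears), put
\[
\K \;=\; \{I \subseteq [m] : J_k \not\subseteq I \text{ for all } k\}.
\]
This collection contains $\varnothing$ and is obviously closed under taking subsets, hence is a simplicial complex. The non-simplices are precisely those $I$ containing some $J_k$, so by Step 1
\[
\bigcup_{I \notin \K} C_I \;=\; \bigcup_{k=1}^r \bigcup_{I \supseteq J_k} C_I \;=\; \bigcup_{k=1}^r C_{J_k} \;=\; |\mathcal{A}|,
\]
and therefore $U(\K) = \CC^m \setminus |\mathcal{A}|$. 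The real case is identical, replacing $\CC$ by $\RR$ throughout and using the same points $p_I$.

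There is no genuine obstacle here; the argument is essentially bookkeeping. The only subtle step is the equivalence in Step 3, which relies crucially on the fact that $\K$ is closed under subsets so that ``every subset of $I$ lies in $\K$'' collapses to the single condition $I \in \K$.
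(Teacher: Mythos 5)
Your argument is correct and is essentially the standard proof that one finds in Buchstaber--Panov (the paper itself does not reprove this statement but cites \cite{BP00}): reduce the union to inclusion-minimal generators, then recover $\K$ from $U(\K)$ by testing characteristic points $p_I$. Step~3 in fact also gives the reverse order implication $U(\K_1)\subseteq U(\K_2)\Rightarrow \K_1\subseteq\K_2$ (since $I\in\K_1 \Leftrightarrow p_I\in U(\K_1)$), which you should state explicitly so that the correspondence is an order isomorphism in both directions, not merely order preserving one way.

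One small imprecision in Step~4: you cannot simply ``discard $C_\varnothing = \CC^m$ if it appears,'' since doing so changes the union $|\mathcal{A}|$. The correct thing to say is that an arrangement is, by convention, a collection of \emph{proper} subspaces, so every $J_k$ is nonempty; only then does the bijection hold, because $\varnothing\in\K$ always forces $C_\varnothing\notin\mathcal{CA}(\K)$ and hence $U(\K)\neq\varnothing$. With that convention fixed, the argument is complete.
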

Coordinate subspace arrangement complements are the examples of polyhedral product spaces.

Let $\K$ be a simplicial complex on $[m]$ and let
$$(\textit{\textbf X}, \textit{\textbf A}) = \{ (X_i, A_i), i = 1, \dots, m \}$$
be a collection of $m$ pairs of spaces.
The \textit{polyhedral product} of $(\textit{\textbf X}, \textit{\textbf A})$ corresponding to $\K$ 
is the topological space in $X_1 \times \dots \times X_m$ given by
$$(\textit{\textbf X}, \textit{\textbf A})^\K = \bigcup_{I \in \K} Y_1 \times \dots \times Y_m,
\text{ where } Y_i = \begin{cases} X_i, &\text{if } i \in I,\\ A_i, &\text{if } i \notin I. \end{cases}$$

We can see that $U(\K) = (\CC, \CC^\times)^\K$. 
Other examples of polyhedral products are the \textit{moment-angle complex}
$$\Z_\K = (D^2, S^1)^\K,$$
and the \textit{real moment-angle complex}
$$\R_\K = (D^1, S^0)^\K.$$
\begin{theorem}[\cite{BP}, Theorem~4.7.5]
	The moment-angle complex $\Z_\K$ is a $T^m$-invariant subspace in $U(\K)$, and there is a $T^m$-equivariant 
	deformation retraction
	$$\Z_\K \hookrightarrow U(\K) \stackrel{\simeq}{\to} \Z_\K.$$
	The same is true in the real case.
\end{theorem}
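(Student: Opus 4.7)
The plan is to build an explicit $T^m$-equivariant strong deformation retraction of $U(\K)$ onto $\Z_\K$. First observe the inclusion: if $w\in\Z_\K=(D^2,S^1)^\K$ then $\{i:w_i=0\}\subseteq\{i:|w_i|<1\}\in\K$, so the zero-set is itself a simplex of $\K$ and $w\in U(\K)$. Both subspaces are $T^m$-invariant because the coordinate-wise torus action preserves moduli $|z_i|$ and the condition $z_i=0$, and the inclusion is therefore equivariant.

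I would carry out the retraction in two stages. In stage one, apply the coordinate-wise $T^m$-equivariant strong deformation retraction $h_t\colon\CC\to\CC$ given by $h_t(z)=z$ for $|z|\le 1$ and $h_t(z)=z/(1+t(|z|-1))$ for $|z|\ge 1$, which retracts $\CC$ onto $D^2$, preserves $\CC^\times$, and satisfies $h_t(z)=0$ if and only if $z=0$. The coordinate-wise product then preserves the zero-pattern of each $z\in U(\K)$ and yields a $T^m$-equivariant strong deformation retraction of $U(\K)$ onto $U(\K)\cap(D^2)^m=(D^2,D^2\setminus\{0\})^\K$. In stage two, one further retracts $(D^2,D^2\setminus\{0\})^\K$ onto $\Z_\K=(D^2,S^1)^\K$ by continuously rescaling each nonvanishing coordinate until its modulus reaches $1$ whenever that coordinate's index lies outside an appropriate simplex of $\K$; concatenating with stage one gives the sought retraction.

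The principal obstacle is stage two: there is no single coordinate-wise retraction $(D^2,D^2\setminus\{0\})\to(D^2,S^1)$, since $w\mapsto w/|w|$ does not extend continuously across the origin. One constructs the retraction via per-coordinate positive scaling factors that depend on the whole point, i.e.\ continuous $f_i(w)\ge 1$ such that $w\mapsto(f_i(w)w_i)_i\in\Z_\K$, which amounts to requiring that the set $\{i:f_i(w)|w_i|<1\}$ lies in $\K$ and contains $\{i:w_i=0\}$. To choose such $f_i$ continuously, I would pass to the $T^m$-quotient, working on the subsets $\{r\in[0,1]^m:\{i:r_i=0\}\in\K\}$ and $\{r\in[0,1]^m:\{i:r_i<1\}\in\K\}$ of the cube, and use a partition of unity subordinate to the open-star cover $\{\st_\K(I)\}_{I\in\K}$ of the quotient to blend stratum-specific rescalings across the strata where the zero-pattern changes. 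Since each $f_i$ is continuous and positive on the appropriate stratum, the $T^m$-equivariant lift $w\mapsto(f_i(w)w_i)_i$ is continuous. The real case is identical, with $(\RR,\RR^\times)$ and $(D^1,S^0)$ replacing the complex pair and the $(\ZZ/2)^m$-action replacing the torus action.
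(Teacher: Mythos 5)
Your Stage~1 is fine and essentially standard: the coordinate-wise map $h_t$ is $T^m$-equivariant, does not create new zeros, and therefore deformation retracts $U(\K)=(\CC,\CC^\times)^\K$ onto $(D^2,D^2\setminus\{0\})^\K$. You also correctly identify the real content of the theorem as Stage~2, namely retracting $(D^2,D^2\setminus\{0\})^\K$ onto $(D^2,S^1)^\K$; this is exactly where the coordinate-wise $z\mapsto z/|z|$ fails and where the proof in \cite{BP} does the genuine work, by passing to the $T^m$-quotient $([0,1],(0,1])^\K\subset[0,1]^m$ and constructing a combinatorial deformation retraction onto the cubical complex $cc(\K)=([0,1],\{1\})^\K$ (via subdivision of the cube), then lifting $T^m$-equivariantly.

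However, the resolution you sketch for Stage~2 has a real gap. Writing $g(r)_i=\min(f_i(r)r_i,1)$ for the retraction on the quotient cube, the requirement is that $\{i: g(r)_i<1\}\in\K$ \emph{for every} $r$ in the domain. If you take a partition of unity $\{\phi_\sigma\}$ and blend the naive stratum retractions $g_\sigma$ (which send $r_i\mapsto 1$ for $i\notin\sigma$ and fix the rest), then $g(r)_i<1$ exactly when $r_i<1$ and $\phi_\sigma(r)>0$ for some $\sigma\ni i$; thus $\{i: g(r)_i<1\}$ is contained in the \emph{union} of all $\sigma$'s supporting $\phi_\sigma$ at $r$, and a union of simplices of $\K$ need not lie in $\K$. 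So a generic partition of unity does not keep you inside $\Z_\K$; the cover must be chosen so that any overlapping supports correspond to simplices whose union is again in $\K$, and arranging this is precisely the combinatorial heart of the argument, not a routine consequence of paracompactness. There is also a category slip in the phrase ``open-star cover $\{\st_\K(I)\}$ of the quotient'': the open stars of $\K$ live in $|\K|$, whereas the quotient you need to cover is the cubical subspace $([0,1],(0,1])^\K$ of the cube, so at minimum you would need to specify the triangulation of the cube (e.g.\ a barycentric subdivision) in which the open stars are taken and then verify the nerve/coherence condition there. As it stands, the plan is pointed in the right direction but the crucial step remains unproved.
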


For each subset $I = \{ i_1, \dots, i_k\} \subset [m]$ define the \textit{diagonal subspace} $D_I$ in $\CC^m$ as
	$$D_I = \{(z_1, \dots, z_m) \in \CC^m : z_{i_1} = \dots = z_{i_k} \}. $$
A configuration $\mathcal{A} = \{L_1, \dots, L_r \}$ is called \textit{diagonal} if every subspace $L_i$, $i = 1, \dots, r$, is a diagonal subspace.
Given a simplicial complex $\K$ on the vertex set $[m]$ (without ghost vertices), the \textit{diagonal subspace arrangment} $\mathcal{DA}(\K)$
is the set of subspaces $D_I$ such that $I$ is not a simplex of $\K$:
$$\mathcal{DA}(\K) = \{D_I : I \notin \K \}.$$
Denote the complement of the arrangement $\mathcal{DA}(\K)$ by $D(\K)$.

\begin{proposition}[\cite{BP00}, Proposition~5.3.2]
The assignment $\K \mapsto D(\K)$ defines a one-to-one order preserving correspondence between the set of simplicial complexes on
the vertex set $[m]$ and the set of diagonal subspace arrangement complements in $\CC^m$ (or $\RR^m$).
\end{proposition}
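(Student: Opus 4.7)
The plan is to construct an explicit inverse to the map $\K \mapsto D(\K)$ and reduce the entire proposition to one essentially algebraic step: the irreducibility of linear subvarieties. Given a diagonal arrangement $\mathcal{A} = \{D_{I_1}, \dots, D_{I_r}\}$ in $\CC^m$ with each $|I_j| \ge 2$ (so that no member of $\mathcal{A}$ equals all of $\CC^m$), I would define
\[
\K(\mathcal{A}) = \bigl\{ J \subseteq [m] : D_J \not\subseteq |\mathcal{A}| \bigr\}.
\]
Since $J' \subseteq J$ forces $D_J \subseteq D_{J'}$, this collection is automatically closed under taking subsets, and the condition $|I_j| \ge 2$ ensures no vertex $\{i\}$ is a missing face, so $\K(\mathcal{A})$ has no ghost vertices.

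The technical heart is the following equivalence, valid for any simplicial complex $\K$ on $[m]$ and any subset $J \subseteq [m]$:
\[
D_J \subseteq \bigcup_{I \notin \K} D_I \quad\Longleftrightarrow\quad J \notin \K.
\]
The backward direction is tautological. For the forward direction, I would use that $D_J$ is a linear, hence irreducible, subvariety of $\CC^m$: containment in a finite union of linear subspaces forces $D_J \subseteq D_{I_0}$ for some single $I_0 \notin \K$. The elementary observation that $D_J \subseteq D_{I_0}$ is equivalent to $I_0 \subseteq J$ then exhibits a subset $I_0 \subseteq J$ with $I_0 \notin \K$, and closure of $\K$ under subsets yields $J \notin \K$.

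With this equivalence in hand, the remaining assertions are bookkeeping. Injectivity: if $J$ distinguishes $\K$ from $\K'$, then $D_J$ is contained in exactly one of $|\mathcal{DA}(\K)|$, $|\mathcal{DA}(\K')|$, so the complements differ. Surjectivity: for an arrangement $\mathcal{A}$ one checks $|\mathcal{DA}(\K(\mathcal{A}))| = |\mathcal{A}|$, with $\supseteq$ coming from $I_j \notin \K(\mathcal{A})$ and $\subseteq$ from the defining property of $\K(\mathcal{A})$. Order preservation: $\K \subseteq \K'$ means $\K'$ has fewer non-simplices, hence $|\mathcal{DA}(\K')| \subseteq |\mathcal{DA}(\K)|$ and $D(\K) \subseteq D(\K')$; the converse follows by applying the key equivalence, which translates a containment of arrangement unions into a containment of complexes.

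The only step with real mathematical content, and the one I would emphasise in the write-up, is the irreducibility argument that passes from containment in a union of linear subspaces to containment in a single one; without it, the comparison between the combinatorial and geometric data breaks. Everything else is a translation between inclusions of subsets of $[m]$ and inclusions of the associated diagonal subspaces under the order-reversing correspondence $I \subseteq J \iff D_J \subseteq D_I$.
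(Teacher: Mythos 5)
Your argument is correct, and it is worth noting that the paper itself does not prove this proposition but cites it from \cite{BP00}, so there is no in-text proof to compare against. The construction you give is the natural one, and you have correctly identified the one nontrivial step: passing from $D_J \subseteq \bigcup_{I \notin \K} D_I$ to $D_J \subseteq D_{I_0}$ for a single $I_0$. Two small remarks on that step. First, since you invoke irreducibility of a linear subvariety, it would be cleaner (and cover the real case $\RR^m$ uniformly) to appeal instead to the elementary linear-algebra fact that a vector space over an infinite field is never a finite union of proper subspaces. Second, your observation ``$D_J \subseteq D_{I_0} \iff I_0 \subseteq J$'' fails when $|I_0| \le 1$ (then $D_{I_0} = \CC^m$), so you should flag that it is applied only to non-faces $I_0 \notin \K$, which have $|I_0| \ge 2$ precisely because $\K$ has no ghost vertices; this same restriction is what guarantees $|\mathcal A| \subsetneq \CC^m$ and hence that $\K(\mathcal A)$ has no ghost vertices, as you note. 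With those points made explicit the bookkeeping for injectivity, surjectivity, and order preservation goes through exactly as you describe.
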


\section{The cohomology ring of the complements of arrangements}
The cohomology groups of a diagonal arrangement complements can be computed via its intersection lattice using the Goresky--MacPherson formula \cite{GM}.
Given a poset $Q$, denote its order complex by $\Delta(Q)$.
For elements $a, b$ in the poset $Q$ we denote by $[a, b]$, $(a, b]$ and $[a, b)$ 
the respective intervals in $Q$. The corresponding order complexes are denoted by $\Delta[a, b]$, $\Delta(a, b]$, $\Delta[a, b)$.

\begin{theorem}[\cite{GM}, Part~III; \cite{dLS}, Proposition~3.1]\label{GorMac}
	Let $\mathcal{A}$ be an arrangement in $\RR^N$ with intersection lattice $P$.
	Denote by $\DD[\perp, u]$ the pair of spaces 
	$$\DD[\perp, u] = (\Delta[\perp, u], \Delta(\perp, u] \cup \Delta[\perp, u)).$$

	Then the cohomology groups of the complement $ $ are given by
	$$H^q(M(\mathcal{A})) \cong \bigoplus_{u \in P} H_{N - d(u) - q}(\DD[\perp, u]).$$
\end{theorem}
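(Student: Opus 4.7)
The plan is to derive the formula from Alexander duality in $S^N$ combined with a lattice-indexed decomposition of the arrangement union, and then translate the result into the relative-homology form of the statement.

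First, I would compactify $\RR^N$ to $S^N$ and apply Alexander duality. Viewing $M(\mathcal{A})$ inside $S^N$ presents it as the complement of the closed subset $|\mathcal{A}|^{+}$ (the one-point compactification of $|\mathcal{A}|$ with the added point identified with $\infty \in S^N$). Alexander duality then yields
$$\tilde{H}^{q}(M(\mathcal{A})) \cong \tilde{H}_{N - q - 1}(|\mathcal{A}|^{+}),$$
reducing the problem to computing the compactly supported homology of the arrangement union.

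Second, I would decompose $|\mathcal{A}|^{+}$ using the intersection lattice $P$. The geometric key is that $|\mathcal{A}|^{+}$ is naturally a homotopy colimit of the diagram sending $u \in P \setminus \{\perp\}$ to $u^{+}$, with structure maps induced by subspace inclusions $v \subset u$. A projection / nerve lemma applied to this diagram produces a wedge-type model whose $u$-piece is the smash product $u^{+} \wedge \Sigma\, \Delta(\perp, u)_{+}$, the extra suspension coming from the way the open interval $(\perp, u)$ parametrises the identifications. Taking reduced homology and using $\tilde{H}_{\ast}(u^{+}) \cong \tilde{H}_{\ast}(S^{d(u)})$ collapses each $u$-summand to a single shifted copy of order-complex homology, giving
$$\tilde{H}_n(|\mathcal{A}|^{+}) \;\cong\; \bigoplus_{u > \perp} \tilde{H}_{n - d(u) - 1}(\Delta(\perp, u)).$$

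Third, I would match this against the relative homology of $\DD[\perp, u]$. The order complex $\Delta[\perp, u]$ is contractible, being a cone on $\Delta(\perp, u]$ with apex $\perp$, so the long exact sequence of the pair yields $H_n(\DD[\perp, u]) \cong \tilde{H}_{n-1}(\Delta(\perp, u] \cup \Delta[\perp, u))$. Both $\Delta(\perp, u]$ and $\Delta[\perp, u)$ are cones on $\Delta(\perp, u)$ with apices $u$ and $\perp$ respectively, meeting precisely in $\Delta(\perp, u)$; hence their union is the unreduced suspension of $\Delta(\perp, u)$, and
$$H_n(\DD[\perp, u]) \cong \tilde{H}_{n-2}(\Delta(\perp, u)).$$
Substituting this into the Alexander-dual formula and setting $n = N - d(u) - q$ recovers the stated decomposition, with the $u = \perp$ term accounting for the shift between reduced and unreduced cohomology.

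The main obstacle is the second step: the lattice-indexed decomposition of $|\mathcal{A}|^{+}$ requires a genuine geometric input, either a Morse-theoretic stratification argument as in \cite{GM}, or a combinatorial diagram-of-spaces model and a nerve / wedge lemma as in \cite{dLS}. The other steps are essentially formal consequences of Alexander duality and of the cone / suspension structure inherent in order complexes of bounded intervals.
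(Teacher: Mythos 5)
The paper does not prove this theorem: it is quoted from \cite{GM} and \cite{dLS} and used only as a black box in Section~3, so there is no internal proof to compare against. Your outline nonetheless reproduces the correct strategy behind \cite{dLS} (and behind the diagram-of-spaces proof of the Goresky--MacPherson formula): Alexander duality in $S^N$, a lattice-indexed wedge decomposition of the compactified union $|\mathcal{A}|^{+}$, and the observation that $\Delta[\perp,u]$ is a cone and $\Delta(\perp,u]\cup\Delta[\perp,u)$ is an unreduced suspension of $\Delta(\perp,u)$, converting $H_n(\DD[\perp,u])$ into $\widetilde H_{n-2}(\Delta(\perp,u))$. Steps one and three are formally correct as you present them, and you rightly identify step two as the only place where genuine geometric or combinatorial input is required, supplied either by stratified Morse theory in \cite{GM} or by the projection and wedge lemmas for homotopy colimits of diagrams of spaces.

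One imprecision worth flagging in step two: the wedge summand attached to $u>\perp$ should be the join $\Delta(\perp,u)*u^{+}$, not the smash $u^{+}\wedge\Sigma\Delta(\perp,u)_{+}$. For $\Delta(\perp,u)\neq\varnothing$ the latter carries an extraneous $S^{d(u)+1}$ wedge factor (since $\Sigma X_{+}\simeq \Sigma X\vee S^{1}$), and for $\Delta(\perp,u)=\varnothing$ (i.e.\ $u$ covers $\perp$) it degenerates to a point, whereas the join correctly gives $u^{+}\simeq S^{d(u)}$. The homology formula you then write down, $\widetilde H_n(|\mathcal{A}|^{+})\cong\bigoplus_{u>\perp}\widetilde H_{n-d(u)-1}(\Delta(\perp,u))$, is the one that follows from the join, so the slip does not propagate into the degree count $N-d(u)-q$ or into the final statement.
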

For a particular family of diagonal arrangements the cohomology groups can be calculated from the links or full subcomplexes in $\K$.
Recall that the \textit{link} of a simplex $I \in \K$ is the subcomplex 
$$\link_{\K} I = \{J \in \K : I \cup J \in \K, I \cap J = \varnothing \},$$
and for $J \subset [m]$ the \textit{full subcomplex} of $\K$ on the set $J$ is 
$$\K_J = \{I \in \K : I \subset J \}.$$
\begin{proposition}\label{GorMac2}
	Suppose that any two missing faces of $\K$ have a common vertex. Then there is an additive isomorphism
	\begin{equation}\label{Alink}
		H^q(D(\K)) \cong \bigoplus_{\widehat I \in \widehat \K} H_{2m - 2|\widehat I| - q - 4}(\link_{\widehat \K}\widehat I),
	\end{equation}
	\begin{equation}\label{subcompl}
		H^q(D(\K)) \cong \bigoplus_{I \subset [m]} \widetilde H^{q - |I| + 1}(\K_I).
	\end{equation}
\end{proposition}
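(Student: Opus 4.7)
The plan is to apply the Goresky--MacPherson formula of Theorem~\ref{GorMac} to $\mathcal{DA}(\K)$, use the hypothesis to make the intersection lattice completely explicit, and then identify the resulting order complexes combinatorially. First I will describe the intersection lattice $P$ of $\mathcal{DA}(\K)$. If $I_1,I_2\in MF(\K)$ share a vertex $v$, the equations defining $D_{I_1}$ and $D_{I_2}$ jointly force every coordinate indexed by $I_1\cup I_2$ to equal $z_v$, so $D_{I_1}\cap D_{I_2}=D_{I_1\cup I_2}$. The pairwise property propagates: the union $I_1\cup I_2$ still shares a vertex with any further missing face, and by induction every non-trivial intersection in $\mathcal{DA}(\K)$ is of the form $D_J$ with $J=I_1\cup\dots\cup I_r$ a union of missing faces. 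Thus $P\smallsetminus\{\perp\}$ is order-isomorphic (by reverse inclusion) to the poset of such \emph{good} subsets $J\subset[m]$ ordered by inclusion.

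Next I will simplify the relative groups appearing in Theorem~\ref{GorMac}. For $u=D_J$, the order complex $\Delta[\perp,D_J]$ is contractible (it is a cone with apex either $\perp$ or $D_J$), while $\Delta(\perp,D_J]$ and $\Delta[\perp,D_J)$ are each cones on $\Delta(\perp,D_J)$ meeting exactly in $\Delta(\perp,D_J)$; their union is therefore $\Sigma\Delta(\perp,D_J)$. The long exact sequence of the pair gives
$$H_k(\DD[\perp,D_J])\cong\widetilde H_{k-2}(\Delta(\perp,D_J)).$$
Substituting $N=2m$ and $d(D_J)=2(m-|J|+1)$ into Theorem~\ref{GorMac} converts the Goresky--MacPherson decomposition into
$$H^q(D(\K))\cong\bigoplus_{J\text{ good}}\widetilde H_{2|J|-4-q}(\Delta(\perp,D_J)),$$
with the $\perp$-term contributing a $\ZZ$ in degree zero.

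The remaining step is combinatorial: to identify $\Delta(\perp,D_J)$. Under the hypothesis the missing faces of $\K_J$ form a cover of $J$ with all pairwise intersections non-empty, and a nerve/Quillen fiber-lemma argument of the kind developed in \cite{dLS} identifies $\Delta(\perp,D_J)$ with the link $\link_{\widehat\K}\widehat I$ of the corresponding simplex of $\widehat\K$; tracking degrees then gives (\ref{Alink}). The equivalent form (\ref{subcompl}) follows by combinatorial Alexander duality inside $J$: the isomorphism $\widetilde H_i\cong\widetilde H^{|J|-i-3}(\K_J)$ for the Alexander dual of $\K_J$ converts $\widetilde H_{2|J|-4-q}$ into $\widetilde H^{q-|J|+1}(\K_J)$. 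The sum in (\ref{subcompl}) can be extended harmlessly to all $I\subset[m]$ because for non-good $I$ the full subcomplex $\K_I$ is contractible---either $I\in\K$ (so $\K_I$ is a simplex) or some vertex of $I$ lies in no missing face of $\K_I$ and is therefore a cone vertex for $\K_I$. I expect the principal obstacle to be this nerve-lemma identification of $\Delta(\perp,D_J)$ with the link in $\widehat\K$; the pairwise-intersection hypothesis is precisely what makes the relevant nerve contractible on the pieces in question and closes the argument.
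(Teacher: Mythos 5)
Your overall strategy coincides with the paper's: apply Theorem~\ref{GorMac}, identify the intersection lattice combinatorially, reduce $H_*(\DD[\perp,u])$ to $\widetilde H_{*-2}(\Delta(\perp,u))$, identify that order complex with a link in $\widehat\K$, and finish with combinatorial Alexander duality. Your degree bookkeeping, your suspension/long-exact-sequence derivation of the shift by $2$ (which the paper simply quotes from \cite{dLS}), and your cone-vertex argument for why non-``good'' subsets contribute nothing to \eqref{subcompl} are all correct.

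The one substantive divergence is your description of the intersection lattice, and it is exactly what creates the step you leave open. The arrangement $\mathcal{DA}(\K)$ as defined in the paper consists of $D_I$ for \emph{all} non-faces $I$, not only the missing faces; under the hypothesis (any two non-faces then also meet, since each contains a missing face) the lattice is therefore the poset of \emph{all} non-faces of $\K$ under inclusion, i.e.\ the face poset of $\widehat\K$ reversed. With that lattice the open interval $(\perp,D_I)$ is literally the poset of faces of $\widehat\K$ strictly containing $\widehat I$, so $\Delta(\perp,D_I)$ \emph{is} the barycentric subdivision of $\link_{\widehat\K}\widehat I$ by definition of the order complex --- no nerve lemma or Quillen fiber lemma is needed, and the sum in \eqref{Alink} over all $\widehat I\in\widehat\K$ comes out directly. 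By instead passing to the sublattice of unions of missing faces (i.e.\ implicitly replacing the arrangement by its maximal subspaces), you make $\Delta(\perp,D_J)$ the order complex of only the ``good'' proper non-faces of $J$, which is a proper subcomplex of the barycentric subdivision of the link; identifying the two then genuinely requires an extra homotopy equivalence (e.g.\ via the descending closure operator sending a non-face to the union of the missing faces it contains), and this is precisely the ``principal obstacle'' you acknowledge but do not close. So your argument is not wrong, but it is incomplete at that point, and the gap disappears if you use the arrangement and lattice as the paper defines them.
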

\begin{proof}
Note that if $J_1 \cap J_2 \neq \varnothing$, then the intersection of the diagonal subspaces $D_{J_1}$ and $D_{J_2}$ is the diagonal subspace $D_{J_1 \cup J_2}$.
It follows that if any two missing faces of $\K$ have a common vertex, then every element of the intersection lattice of the arrangement $\mathcal{DA}(\K)$ 
is $D_I = \{z_{i_1} = \dots = z_{i_k}\}$ for some $I = \{i_1, \dots, i_k \} \notin \K$.
Therefore, the intersection lattice is isomorfic to the poset of non-faces of $\K$ ordered by inclusion.

In other words, the intersection lattice of $\mathcal{DA}(\K)$ is isomorphic to the set of faces of the Alexander dual complex $\widehat \K$
ordered by reverse inclusion with maximal element $\top = \varnothing$ and added minimal element $\perp$.

In \cite{dLS} the following isomorphism was proved:
\begin{equation*}
	H_k(\DD[\perp, u]) \cong \begin{cases}
			\widetilde{H}_{k-2}(\Delta(\perp, u)) & u > \perp,\\
			H_k(pt) & u = \perp,
	\end{cases}
\end{equation*}
where we assume that $\widetilde H_{-1}(\varnothing) \cong \ZZ$.

Since the intersection lattice of $\mathcal{DA}(\K)$ is isomorphic to $\widehat \K$, the order complex
$\Delta(\perp, u)$ for $u = \{z_{i_1} = \dots = z_{i_k}\}$ is isomorphic to the barycentric subdivision of the simplicial complex
$\link_{\widehat \K} \widehat I$, where $\widehat I = [m] \backslash I = \{i_1, \dots, i_k \}$. 
Thus, we have
$$H_k(\DD[\perp, u]) \cong H_{k-2}(\link_{\widehat \K} \widehat I)$$
and from Theorem~\ref{GorMac} we obtain the formula (\ref{Alink}).

To prove (\ref{subcompl}) we use the combinatorial Alexander duality \cite[Corollary~2.4.6]{BP}.
We have
$$\widetilde H_{2m-2|\widehat I| - q - 4}(\link_{\widehat \K} \widehat I) = \widetilde H_{2|I| - q - 4} (\link_{\widehat \K} \widehat I)
	\cong \widetilde H^{q - |I| + 1} (\K_I). \qedhere $$
\end{proof}
\begin{remark}A similar formula for coordinate arrangement complements was obtained in \cite{BP}.
The connection between these two results will be described below in Theorem~\ref{res2}.
\end{remark}
The ring structure in cohomology of the so-called $(\ge 2)$-arrangement complements was described in \cite{dLS}. 
We need only the case of complex arrangements in $\CC^m$,
which are always $(\ge 2)$-arrangements.
The work \cite{dLS} uses the following combinatorial and algebraic constructions to describe the multiplication in the Goresky--MacPherson formula.

In the notation of Theorem~\ref{GorMac}, we have a map
$\times : H_k(\DD[\perp, u]) \otimes H_l(\DD[\perp, v]) \to H_{k+l}(\DD([\perp, u] \times [\perp, v]) )$ 
defined on the simplicial chains by
$$\times : C_k(\Delta(P)) \otimes C_l(\Delta(Q)) \to C_{k+l}(\Delta(P \times Q)),$$
$$\langle u_0, \dots, u_k \rangle \otimes \langle v_0, \dots, v_l \rangle
\mapsto \sum_{ \substack{
	0 = i_0 \le \dots \le i_{k+l} = k+l \\
	0 = j_0 \le \dots \le j_{k+l} = k+l \\
	(i_r, j_r) \neq (i_{r+1}, j_{r+1}) } }
\sigma_{i, j} \langle (u_{i_0}, v_{j_0}), \dots, (u_{i_{k+l}}, v_{j_{k+l}}) \rangle.
$$
The signs $\sigma_{i, j}$ are defined so that $\sigma_{i, j} = 1$ if $k = 0$ or $l = 0$ 
and the Leibniz rule $\partial(s \times t) = \partial s \times t + (-1)^k s \times \partial t$ is satisfied.

Second, the join operation
$$\vee: [\perp, u] \times [\perp, v] \to [\perp, u \cap v].$$
$$(z, w) \mapsto z \cap w $$
preserves the ordering and therefore induces a simplicial map of the associated order complexes.
If subspaces $u$ and $v$ satisfy the so-called \textit{codimension condition}
$$d(u) + d(v) - d(u \cap v) = 2m \quad \Leftrightarrow \quad u + v = \CC^m$$
then $\vee$ is an injective map and induces a simplicial map of pairs
$$ \vee_*: \DD[\perp, u] \times \DD[\perp, v] \to \DD[\perp, u \cap v].$$
Note that for any two lattices $P$ and $Q$ we have a homeomorphism $\Delta(P) \times \Delta(Q) \cong \Delta(P \times Q)$.

\begin{theorem}[\cite{dLS}, Theorem 5.2]\label{product}
	Let $\mathcal{A}$ be a complex arrangement. Then the product of cohomology classes in Theorem~\ref{GorMac} is given by:
	$$H_k(\DD[\perp, u]) \otimes H_l(\DD[\perp, v]) \to H_{k+l}(\DD[\perp, u \cap v]),$$
	\begin{equation*}
		a \otimes b \mapsto
		\begin{cases} 
			%(-1)^{l(2m - d(u))} 
   \vee_*(a \times b), &\text{if } d(u) + d(v) - d(u \cap v) = 2m,\\
			0, &\text{otherwise}.
		\end{cases}
	\end{equation*}
\end{theorem}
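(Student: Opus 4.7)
The plan is to identify each summand in the Goresky--MacPherson decomposition with classes supported in a tubular neighborhood of the corresponding intersection flat, and then read off the cup product from intersection theory in $\CC^m$.

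First I would model the summand $H_{N - d(u) - q}(\DD[\perp, u])$ geometrically. One way is to filter $\CC^m$ by the complex flats of $\mathcal A$ and consider the induced filtration on $M(\mathcal A)$; the associated stratified Mayer--Vietoris (or Leray) spectral sequence degenerates for complex arrangements, and the $u$-summand is carried by classes linking the flat $u$. Concretely, this summand is the cohomology of the pair obtained by removing from a small complex disk transverse to $u$ the induced subarrangement of flats meeting $u$ properly; excision together with induction on the height of $u$ in $P$ recovers the $\DD[\perp, u]$ term.

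Next, I would determine the cup product via Thom classes. Because the flats are complex linear subspaces of $\CC^m$ they are canonically cooriented, so the class attached to $u$ behaves like a Thom/linking class of the normal bundle to $u$. The cup product of linking classes of $u$ and $v$ is then supported near $u \cap v$; transversality of complex subspaces is exactly $u + v = \CC^m$, i.e.\ $d(u) + d(v) - d(u \cap v) = 2m$. When this condition fails, one can push one of the supporting cycles off $u \cap v$ within the larger flat it actually meets, producing a null-homotopy and giving the vanishing case of the theorem.

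In the transverse case I would identify the intersection product with the combinatorial join map $\vee$ at the chain level. The homeomorphism $\Delta(P) \times \Delta(Q) \cong \Delta(P \times Q)$ together with the shuffle cross product on simplicial chains provides a chain model of the external product of the two summands, and $\vee_*$ realizes the map induced by intersection of flats. The main obstacle I foresee is precisely this last compatibility: one must build a chain-level model of the Goresky--MacPherson isomorphism that is natural enough under inclusions of subarrangements so that the cup product agrees with $\vee_* \circ \times$ on the nose, including the signs $\sigma_{i,j}$. Verifying this on a Salvetti-like or bar-style cell model of $M(\mathcal A)$, and using the canonical complex orientations to fix the signs, is where I expect the bulk of the work to lie.
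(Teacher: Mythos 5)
This statement is cited in the paper as \cite{dLS}, Theorem~5.2; the paper itself offers no proof of it, so there is no in-paper argument to compare against. I can only assess your sketch on its own terms and against the route de~Longueville and Schultz actually take.

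Your outline is a geometrically plausible strategy, but it is a different route from \cite{dLS} and it has a real gap that you yourself flag. De~Longueville and Schultz do not work via tubular-neighborhood Thom classes or a stratified Leray spectral sequence. They build on the Ziegler--\v{Z}ivaljevi\'c homotopy model of the one-point compactification of $|\mathcal A|$ as a wedge of suspensions of order complexes $\Delta(\perp,u)$, pass to $M(\mathcal A)$ by Alexander duality, and then compute the product entirely combinatorially: the diagonal map of $S^{2m}$ is compared, through the Ziegler--\v{Z}ivaljevi\'c equivalence, with the chain-level cross product on order complexes and the join map $\vee$, and the codimension condition falls out of a dimension count in the wedge decomposition rather than from geometric transversality of linking cycles. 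Your approach replaces this with intersection theory and Thom classes; the complex coorientations do make the sign bookkeeping cleaner, which is a genuine advantage, and the ``push the cycle off $u\cap v$'' argument for the non-transverse case is morally right.

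The gap is exactly where you locate it, and it is not minor. Saying the cup product ``is supported near $u\cap v$'' and identifying it with $\vee_*\circ\times$ requires a chain-level model of the Goresky--MacPherson isomorphism that is natural with respect to inclusions of subarrangements and compatible with the shuffle product; establishing this is the entire content of the dLS proof, and it is nontrivial because the Goresky--MacPherson decomposition is not induced by a map of spaces. You would also need to justify the claimed degeneration of the stratified Mayer--Vietoris spectral sequence for complex arrangements, which is not obvious in the form you state it (the $E_1$-degeneration underlying the GM formula is usually obtained by Alexander duality or stratified Morse theory, not by a filtration of $M(\mathcal A)$ itself). As written, the proposal identifies the correct target statement and the correct geometric intuition, but it does not yet constitute a proof; the hard compatibility check is acknowledged but not carried out.
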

\begin{example}\label{D-RP2}
	Let $\K$ be the 6-vertex triangulation of projective plane shown in the Figure~\ref{fig:RP2}.
	\begin{figure}[ht]
		\centering
	\def\svgscale{1}
	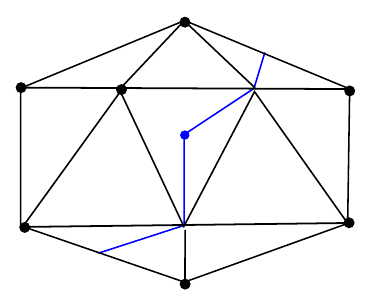

		\caption{The triangulation $\RR P^2$}
		\label{fig:RP2}
	\end{figure}
	We can find non-zero cohomology groups of $D(\K)$ using Proposition~\ref{GorMac2}:
	$$
		H^0(D(\K)) = \ZZ, \quad H^3(D(\K)) = \ZZ^{10},\quad H^4(D(\K)) = \ZZ^{15},$$
		$$H^5(D(\K)) = \ZZ^{6},\quad H^7(D(\K)) = \ZZ_2.
	$$
	The only possible non-trivial product is 
	$$H^3(D(\K)) \otimes H^4(D(\K)) \to H^7(D(\K)).$$
	By Theorem~\ref{GorMac} we have
	$$H^3(D(\K)) \cong \bigoplus_{I \notin \K, |I| = 3} H_1(\DD[\perp, D_I]), \quad H^4(D(\K)) \cong \bigoplus_{J \subset [6], |J| = 4} H_2(\DD[\perp, D_J]),$$
    so we are interested in product 
    \begin{equation}\label{product1}
        H_1(\DD[\perp, D_I]) \otimes H_2(\DD[\perp, D_J]) \to 
        H_3(\DD[\perp, D_{I \cup J}]).
    \end{equation}
	For any $I \notin \K$, $|I| = 3$ the order complex $\Delta[\perp, D_I]$
	is the 1-simplex $\langle \perp, D_I \rangle$.
	So the generator of 
	$H_1(\DD[\perp, D_I])$ is the homology class
	$a = [\langle \perp, D_I \rangle]$.
	
	For $J$ such that $|J| = 4$ there are only two subsets $I_1, I_2 \subset J$, $|I_1| = |I_2| = 3$ that do not belong to $\K$.
	Hence, the 2-simplices of $\Delta[\perp, D_J]$ are
	$$\{\langle \perp, D_{I_1}, D_J \rangle, \langle \perp, D_{I_2}, D_J \rangle \},$$
	and $H_2(\DD[\perp, D_J])$ is generated by the homology class
	$b = [\langle \perp, D_{I_1}, D_J \rangle - \langle \perp, D_{I_2}, D_J \rangle ]$.

    The codimension condition in Theorem~\ref{product} implies that $ I \cup J = [6]$,
    so we have the following isomorphism
    \begin{equation}\label{Z-2}
    H_3(\DD[\perp, D_{I \cup J}]) \cong H_1(\Delta(\perp, D_{[6]})) 
        \cong H_1(\widehat{\K}') \cong \ZZ_2,
    \end{equation}
    where $\widehat{\K}'$ is the barycentric subdivision  of the Alexander dual complex $\widehat{\K}$.
    
    From the definition of the map $\times$ we have:
	$$ a \times b = 
	[ 
		\langle (\perp, \perp), (\perp, D_{I_1}),  (\perp, D_J), (D_I, D_J) \rangle - 
		\langle (\perp, \perp), (\perp, D_{I_1}), (D_I, D_{I_1}), (D_I, D_J) \rangle +
	$$
	$$
		\langle (\perp, \perp), (D_I, \perp), (D_I, D_{I_1}), (D_I, D_J) \rangle -
		\langle (\perp, \perp), (\perp, D_{I_2}),  (\perp, D_J), (D_I, D_J) \rangle +
	$$
	$$
		\langle (\perp, \perp), (\perp, D_{I_2}), (D_I, D_{I_2}), (D_I, D_J) \rangle -
		\langle (\perp, \perp), (D_I, \perp), (D_I, D_{I_2}), (D_I, D_J) \rangle
	].
	$$
	Since $| I \cap J | = 1$, we have $I_k \not\subset I$ for $k = 1,2$.
	Then $\vee_*$ maps the element $a \times b$ to %up to sign to
	$$[
		\langle \perp, D_{I_1}, D_J, D_{[6]} \rangle - \langle \perp, D_{I_1}, D_{I \cup I_1}, D_{[6]} \rangle +
		\langle \perp, D_I, D_{I \cup I_1}, D_{[6]} \rangle -
	$$
	$$
		\langle \perp, D_{I_2},  D_J, D_{[6]} \rangle +
		\langle \perp, D_{I_2}, D_{I \cup I_2}, D_{[6]} \rangle -
		\langle \perp, D_I, D_{I \cup I_2}, D_{[6]} \rangle ].
	$$
	This element is the generator of the group $H_3(\DD[\perp, D_{[6]}])$. For $I = \{1,2,3\}$, 
	$J = \{3,4,5,6\}$ its image under the isomorphism~(\ref{Z-2})
 %$H_3(\DD[\perp, D_{[6]}]) \cong H_1(\Delta(\perp, [6])) \cong %H_1(\widehat \K ')$, 
	%where $\widehat \K '$ is a barycentric subdivision of $\widehat \K$,
	is shown in the Figure~\ref{fig:RP2}.
    
    Thus, we conclude that product~(\ref{product1}) 
    %of the generators of groups $H^3(D(\K)) \cong \bigoplus n$ and $H^4(D(\K))$
	is non-trivial.
    %if and only if they correspond to the generators of $H^1(\K_I)$ and %$H^1(\K_J)$ for $I, J \notin \K$,
    %$|I| = 3$, $|J| = 4$, $|I \cap J| = 1$. Moreover, the product of %these elements is the generator of the group $H^7(D(\K))$.
\end{example}

\section{The connection between coordinate and diagonal arrangements}
First, we show that any coordinate arrangement complement can be realized as a diagonal arrangement complement up to homotopy.
Given a simplicial complex $\mathcal K$ on~$[m]$, define a new simplicial complex $\mathcal{L}$ on the vertex set $[m+1]$ as
	\begin{multline}\label{from-d-to-u}
		\mathcal{L} = \bigl\{ { \{i_1, \dots, i_k \}: \{i_1, \dots, i_k \} \subset [m] }\bigr\} \\
		\cup \bigl\{ { \{i_1, \dots, i_k, m+1\} \subset [m+1]: \{ i_1, \dots, i_k \} \in \K }\bigr\} .
	\end{multline}
 Geometrically, $\mathcal{L}$ is the union of an $(m-1)$-dimensional simplex on $[m]$ and the cone over $\K$ with apex~$\{m+1\}$.
Consider the corresponding diagonal arrangement complement $D(\mathcal{L}) \subset \CC^{m+1}$. 

\begin{theorem}\label{res1}
There is a homotopy equivalence
%	Let $\K$ be a simplicial complex on $[m]$.
%	Then there is a simplicial complex $\mathcal{L}$ on the vertex set $[m+1]$ such that
	$$U(\K) \simeq D(\mathcal{L}).$$
The same is true for real arrangement complements.
\end{theorem}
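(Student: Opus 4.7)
The plan is to exhibit an explicit linear change of coordinates on $\CC^{m+1}$ that identifies $D(\mathcal{L})$ with the product $U(\K) \times \CC$, after which contracting the extra $\CC$-factor yields the homotopy equivalence.

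First I would identify the non-faces of $\mathcal{L}$. By construction, every subset of $[m]$ belongs to $\mathcal{L}$, and a subset $J \subseteq [m+1]$ with $m+1 \in J$ belongs to $\mathcal{L}$ if and only if $J \setminus \{m+1\} \in \K$. Therefore the non-faces of $\mathcal{L}$ are precisely the sets $I \cup \{m+1\}$ with $I \subseteq [m]$ and $I \notin \K$. Since the complement of a diagonal arrangement depends only on the collection of non-faces and not on their minimality, this gives
\[
    D(\mathcal{L}) \;=\; \CC^{m+1} \,\backslash\, \bigcup_{I \notin \K} D_{I \cup \{m+1\}}
    \;=\; \bigl\{ w \in \CC^{m+1} : \{\, i \in [m] : w_i = w_{m+1} \,\} \in \K \bigr\}.
\]

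Next I would introduce the $\CC$-linear isomorphism $\Phi : \CC^{m+1} \to \CC^m \times \CC$ defined by
\[
    \Phi(w_1, \dots, w_m, w_{m+1}) \;=\; \bigl(w_1 - w_{m+1},\, \dots,\, w_m - w_{m+1};\; w_{m+1}\bigr).
\]
Setting $z_i = w_i - w_{m+1}$, we have $w_i = w_{m+1} \Leftrightarrow z_i = 0$, so the condition cutting out $D(\mathcal{L})$ translates into the condition that $\{i \in [m] : z_i = 0\}$ is a simplex of $\K$, which is precisely the defining condition for $U(\K)$; the coordinate $w_{m+1}$ is left unconstrained. Hence $\Phi$ restricts to a homeomorphism $D(\mathcal{L}) \xrightarrow{\;\cong\;} U(\K) \times \CC$, and projection to the first factor gives a deformation retraction onto $U(\K)$.

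The real case is handled by exactly the same argument with $\CC$ replaced by $\RR$ throughout, giving $D_{\RR}(\mathcal{L}) \cong U_{\RR}(\K) \times \RR \simeq U_{\RR}(\K)$. The only genuine bookkeeping step is the identification of the non-faces of $\mathcal{L}$; once the correct description of $D(\mathcal{L})$ in coordinates is in hand, the proof reduces to a formal linear change of variables, so I do not anticipate any serious obstacle.
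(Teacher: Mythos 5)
Your proof is correct and uses essentially the same linear change of variables as the paper ($z_i \mapsto z_i - z_{m+1}$ for $i\le m$). The paper first deformation-retracts $D(\mathcal{L})$ onto the hyperplane slice $\{z_1+\dots+z_{m+1}=0\}$ and then applies the linear isomorphism to that slice, whereas you apply the isomorphism globally to exhibit the homeomorphism $D(\mathcal{L})\cong U(\K)\times\CC$ and then contract the extra $\CC$ factor --- a mild and arguably cleaner reorganization of the same argument.
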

\begin{proof}
	The map
 %Define the following map:
	$$F: \CC^{m+1}\times [0,1] \to \CC^{m+1},$$
	$$F(z_1, \dots, z_{m+1}, t) = (z_1, \dots, z_{m+1}) - t
	\frac{z_1 + \dots + z_{m+1}}{m+1} (1, \dots, 1)$$
	defines a homotopy between the identity map $id: \CC^{m+1} \to \CC^{m+1}$ and the orthogonal projection
	$P: \CC^{m+1} \to \CC^{m+1}$ onto the hyperplane $\pi = \{z_1 + \dots + z_m = 0\}$.
	
Observe that $F(D(\mathcal{L}), t) \subset D(\mathcal{L})$ for every $t \in [0, 1]$. 
	Indeed, the equality $z_{i_1} = \dots = z_{i_k}$ holds if and only if
	$$z_{i_1} - t \frac{z_1 + \dots + z_{m+1}}{m+1} = \dots = z_{i_k} - t \frac{z_1 + \dots + z_{m+1}}{m+1}.$$
	Hence $z \notin D(\mathcal{L})$ implies $z \notin F(D(\mathcal{L}), t)$, that is $F(D(\mathcal{L}), t) \subset D(\mathcal{L})$.
	We also have that $F(z, t) = z$ for any $z \in \pi$, $t \in [0, 1]$,
	so we obtain $F(D(\mathcal{L}), 1) = D(\mathcal{L}) \cap \pi$,
	and therefore $D(\mathcal{L}) \simeq D(\mathcal{L}) \cap \pi$.
	
	Consider the linear isomorphism $A: \CC^{m+1} \to \CC^{m+1}$ defined by the formula
	$$y_1 = z_1 - z_{m+1},\quad \dots, \quad y_m = z_m - z_{m+1}, \quad y_{m+1} = z_1 + \dots + z_m + z_{m+1}.$$
 %$$\begin{array}{lcl}
	%	y_i = z_i - z_{m+1}, \text{ if } i = 1, \dots, m, \\
	%	y_{m+1} = z_1 + \dots z_m + z_{m+1}.
	%\end{array}$$
	We claim that $A(D(\mathcal{L}) \cap \pi) = U(\K)$, where $U(\K) \subset \{y_{m+1} = 0\} \subset \CC^{m+1}$. 
	Indeed, %the definition of $\K '$ implies that 
    the non-faces of $\mathcal{L}$ are of the form $\{i_1, \dots, i_k, m+1\}$,
	where $\{i_1, \dots, i_k\} \notin \K$. Therefore,
\begin{multline*}
  A(D(\mathcal{L}) \cap \pi) = A\Bigl( { \CC^{m+1} \backslash\!\!\!\!\bigcup_{\{i_1, \dots, i_k\} \notin \K}
		\{z_{i_1} = \dots = z_{i_k} = z_{m+1} \} }\Bigr)  \cap \,A \bigl({ \{z_1 + \dots + z_{m+1} = 0\} }\bigr) \\
		= \Bigl({\CC^{m+1} \backslash \bigcup_{\{i_1, \dots, i_k\} \notin \K}
		\{y_{i_1} = \dots = y_{i_k} = 0 \} }\Bigr) \cap \{y_{m+1} = 0 \} = U(\K),
\end{multline*}  
where the last equality holds since $U(\K)$ is considered as a subspace in the hyperplane $\{y_{m+1} = 0\}$.
It follows that $A\colon \CC^{m+1} \to \CC^{m+1}$ induces a homeomorphism between $D(\mathcal{L}) \cap \pi$ and $U(\K)$, so we obtain a homotopy equivalence $D(\mathcal{L}) \simeq D(\mathcal{L})\cap \pi \cong U(\K)$.
\end{proof}

Conversely, diagonal arrangement complements corresponding to a particular class of simplicial complexes can be realized as coordinate arrangement complements:

\begin{proposition}\label{res1.5}
	Suppose that $\mathcal{L}$ is a simplicial complex on the vertex set $[m]$ that has a face with $m-1$ vertices.
	Then there is a simplicial complex $\K$ such that $D(\mathcal{L}) \simeq U(\K)$.
\end{proposition}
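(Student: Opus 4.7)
The plan is to realize Proposition~\ref{res1.5} as the converse of Theorem~\ref{res1}: I will produce a simplicial complex $\K$ on $[m-1]$ such that applying construction~(\ref{from-d-to-u}) to $\K$ (with the role of $m+1$ played by $m$) recovers $\mathcal{L}$ on the nose, and then invoke Theorem~\ref{res1} to conclude $U(\K) \simeq D(\mathcal{L})$.

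After relabeling the vertices, I may assume that the given $(m-1)$-vertex face of $\mathcal{L}$ is $[m-1]$, so that every subset of $[m-1]$ lies in $\mathcal{L}$. This matches the first family in~(\ref{from-d-to-u}). It then suggests the definition
$$\K = \link_{\mathcal{L}} \{m\} = \{I \subset [m-1] : I \cup \{m\} \in \mathcal{L}\},$$
viewed as a simplicial complex on $[m-1]$ (ghost vertices allowed). The remaining step is to verify that applying~(\ref{from-d-to-u}) to this $\K$ reproduces $\mathcal{L}$. One inclusion is immediate: subsets of $[m-1]$ belong to $\mathcal{L}$ because $[m-1] \in \mathcal{L}$, and faces of the form $I \cup \{m\}$ with $I \in \K$ belong to $\mathcal{L}$ by the definition of $\K$. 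For the reverse inclusion, I split each $J \in \mathcal{L}$ according to whether $m \in J$: if $m \notin J$ then $J \subset [m-1]$, and if $m \in J$ then writing $J = I \cup \{m\}$ with $I \subset [m-1]$ forces $I \in \K$ by the defining property of the link.

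There is no substantial obstacle: the content of the proposition is exactly that the hypothesis on $\mathcal{L}$ (the existence of an $(m-1)$-vertex face) is precisely what is needed to place $\mathcal{L}$ in the image of construction~(\ref{from-d-to-u}). Once $\mathcal{L}$ has been identified as the complex associated to $\K$, Theorem~\ref{res1} delivers $U(\K) \simeq D(\mathcal{L})$ in both the complex and real cases simultaneously.
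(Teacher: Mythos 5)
Your proof is correct and matches the paper's approach exactly: both relabel so that $[m-1]\in\mathcal{L}$, set $\K=\link_{\mathcal{L}}\{m\}$, observe that construction~(\ref{from-d-to-u}) applied to $\K$ recovers $\mathcal{L}$, and invoke Theorem~\ref{res1}. You simply spell out the two-inclusion verification that the paper leaves implicit.
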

\begin{proof}
	Assume that $\{1, 2, \dots, m-1 \} \in \mathcal{L}$.
	Consider the simplicial complex
	$\K = \link_{\mathcal{L}}\{m\} $ on the set $[m-1]$. 
	Since $\mathcal{L}$ can be obtained from $\K$ using operation~(\ref{from-d-to-u}) we have that $U(\K) \simeq D(\mathcal{L})$.
\end{proof}

The class of simplicial complexes $\mathcal L$ for which $D(\mathcal L)\simeq U(\mathcal K)$ can be extended by taking joins. Recall that the \textit{join} of
simplicial complexes $\K_1$ and $\K_2$ on the sets $V_1$ and $V_2$ is the following simplicial complex on the set $V_1\sqcup V_2$:
$$\K_1 * \K_2 = \{ I_1 \sqcup I_2: I_1 \in \K_1, I_2 \in \K_2 \}.$$
\begin{proposition}
%	Let $\K_1$ and $\K_2$ be simplicial complexes on the vertex sets $V_1$ and $V_2$, $|V_k| = m_k$. Then t
There is a homeomorphism
	$$D(\K_1 * \K_2) \cong D(\K_1) \times D(\K_2). $$
\end{proposition}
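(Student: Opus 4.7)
The plan is to prove this as a direct equality of subsets of $\CC^{m_1+m_2}$ under the standard identification $\CC^{m_1+m_2} \cong \CC^{m_1}\times \CC^{m_2}$, where the first $m_1$ coordinates are indexed by $V_1$ and the last $m_2$ by $V_2$. So I would unwind both sides as complements of unions of diagonal subspaces and compare them term by term; no nontrivial topology is needed.

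First I would describe non-faces of the join. A subset $I\subset V_1\sqcup V_2$ decomposes uniquely as $I = I_1 \sqcup I_2$ with $I_j\subset V_j$, and by definition $I\in\K_1*\K_2$ iff $I_1\in\K_1$ and $I_2\in\K_2$. Consequently $I\notin\K_1*\K_2$ iff $I_1\notin\K_1$ or $I_2\notin\K_2$. I would also record the obvious geometric observation that for $I_1\subset V_1$ the diagonal subspace $D_{I_1}\subset\CC^{m_1+m_2}$ corresponds to $D_{I_1}^{(1)}\times\CC^{m_2}$ in the product, and similarly on the other factor.

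Next I would compute the complements on both sides. On the product side,
\begin{multline*}
\bigl(\CC^{m_1}\times\CC^{m_2}\bigr)\setminus\bigl(D(\K_1)\times D(\K_2)\bigr) \\
= \Bigl(\bigcup_{I_1\notin\K_1} D_{I_1}^{(1)}\Bigr)\times\CC^{m_2} \,\cup\, \CC^{m_1}\times\Bigl(\bigcup_{I_2\notin\K_2} D_{I_2}^{(2)}\Bigr)
= \bigcup_{\substack{I\subset V_1,\, I\notin\K_1}} D_I \,\cup\, \bigcup_{\substack{I\subset V_2,\, I\notin\K_2}} D_I,
\end{multline*}
while the complement of $D(\K_1*\K_2)$ is $\bigcup_{I\notin\K_1*\K_2} D_I$, with $I$ ranging over subsets of $V_1\sqcup V_2$.

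Finally I would check the two unions coincide. The inclusion $\supset$ is immediate: any $I$ entirely contained in $V_1$ with $I\notin\K_1$, or entirely in $V_2$ with $I\notin\K_2$, is a non-face of the join. For $\subset$, given a non-face $I = I_1\sqcup I_2$ of $\K_1*\K_2$, by the dichotomy above we may assume $I_1\notin\K_1$ (in particular $I_1\neq\varnothing$); then $D_I\subset D_{I_1}$, since forcing equality on a larger index set is stronger than forcing it on a subset, and $D_{I_1}$ appears in the product-side union. The case $I_2\notin\K_2$ is symmetric. This shows the two complements agree, hence $D(\K_1*\K_2) = D(\K_1)\times D(\K_2)$ as subsets. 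There is no real obstacle; the only mildly subtle point is handling the mixed-type diagonals $D_{I_1\sqcup I_2}$ of the join arrangement by absorbing them into the pure-type diagonals via this inclusion.
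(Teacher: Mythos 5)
Your proof is correct and takes essentially the same approach as the paper: both are direct set-theoretic identifications of the two complements via the structure of non-faces of a join. The paper streamlines the absorption of mixed-type diagonals by first observing that $D(\K) = \CC^m \setminus \bigcup_{I \in MF(\K)} D_I$ and then using $MF(\K_1 * \K_2) = MF(\K_1) \sqcup MF(\K_2)$, whereas you handle the mixed non-faces $I_1 \sqcup I_2$ explicitly via the inclusion $D_{I_1 \sqcup I_2} \subset D_{I_1}$ — the same mechanism, just without the detour through minimality.
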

\begin{proof}
Note that $D(\K) = \CC^m \backslash \bigcup_{ I \notin \K } D_I= \CC^m \backslash \bigcup_{ I \in MF(\K)} D_I$, where the latter union is taken over the missing faces of~$\K$. Since $MF(\K_1 * \K_2) = MF(\K_1) \sqcup MF(\K_2)$, we get
\begin{multline*}D(\K_1 * \K_2) = \CC^m \backslash \Bigl({ \bigcup_{I \in MF(\K_1)} D_I \cup \bigcup_{J \in MF(\K_2)} D_J }\Bigr)\\ 
= \Bigl({ \CC^{m_1} \backslash \bigcup_{I \in MF(\K_1)} D_I }\Bigr) \times \Bigl({ \CC^{m_2} \backslash \bigcup_{J \in MF(\K_2)} D_J }\Bigr) = D(\K_1) \times D(\K_2). \qedhere
\end{multline*}
\end{proof}

\begin{example}
	Consider the simplicial complex that is the boundary of 4-gon:
	$\K = $
	\begin{picture}(37, 20)
		\multiput(10, 2)(15, 0){2}{\line(0, 1){15}}%
		\multiput(10, 2)(0, 15){2}{\line(1, 0){15}}%
		\put(10, 2){\circle*{3}} \put(25, 2){\circle*{3}} \put(10, 17){\circle*{3}} \put(25, 17){\circle*{3}}%
		\put(2, 0){4} \put(30, 0){3} \put(2, 13){1} \put(30, 13){2}
	\end{picture}.
	The corresponding diagonal arrangement complement has the form
	$$D(\K) = \CC^4 \backslash \left({ \{z_1 = z_3\} \cup \{z_2 = z_4\} }\right).$$
	Obviously
	$\K = \K_1 * \K_2$, where each of $\K_1$ and $\K_2$ is the pair of disjoint points.
	Each of these simplicial complexes satisfy the condition of Corollary \ref{res1.5}, so we can determine the homotopy type of $D(\K)$:
	$$D(\K) \cong D(\K_1) \times D(\K_2) \simeq U(\K_0) \times U(\K_0) \simeq S^1 \times S^1 = T^2,$$
	where $\K_0$ is the empty simplicial complex on the set $[2]$.
\end{example}

\begin{lemma}\label{sus}
	Let $X$ be a finite simplicial complex, and let $Y$ be a subcomplex in $X$. Then $\Sigma(X \backslash Y) \simeq (\Sigma X) \backslash Y$.
\end{lemma}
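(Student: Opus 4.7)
The plan is to exploit the standard decomposition of the unreduced suspension as a union of two cones glued along the equator. Writing $\Sigma X = C_+ X \cup_X C_- X$ with each $C_\pm X = X \times [0,1]/X \times \{1\}$ and the two copies of $X$ (the equators at $t=0$) identified, the subcomplex $Y \subset X$ is viewed as sitting in this common equator of $\Sigma X$. The decomposition then restricts to
$$(\Sigma X) \setminus Y = (C_+ X \setminus Y) \cup (C_- X \setminus Y),$$
with intersection $X \setminus Y$.

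The key step is to show that each deleted cone $C_\pm X \setminus Y$ is contractible. I would use the linear homotopy $((x, t), s) \mapsto (x, t + s(1 - t))$, pushing each point toward the apex. This stays inside $C_+ X \setminus Y$ for the following reason: if $x \notin Y$ this is automatic, while if $x \in Y$ then the base point $(x, 0)$ has been removed, so $t > 0$ already, and $t + s(1 - t) \geq t > 0$ throughout the homotopy. This gives a well-defined deformation retraction onto the apex, and the symmetric argument works for $C_- X$.

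Having expressed $(\Sigma X) \setminus Y$ as a pushout of two contractible spaces along $X \setminus Y$, I would invoke the standard fact that such a pushout along a cofibration is homotopy equivalent to the unreduced suspension of the common intersection, yielding $(\Sigma X) \setminus Y \simeq \Sigma(X \setminus Y)$. The main technical point to justify is the cofibration condition for the inclusion $X \setminus Y \hookrightarrow C_\pm X \setminus Y$: although $X \setminus Y$ is not a subcomplex of any natural simplicial decomposition of $C_\pm X \setminus Y$, it admits the obvious open collar $(X \setminus Y) \times [0, 1/2)$ coming from the product structure of the cone, and this NDR data is enough to conclude. The hypothesis that $X$ is a finite simplicial complex with $Y$ a subcomplex is used precisely to guarantee that this collar exists in the ambient topology.
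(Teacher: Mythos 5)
Your approach is genuinely different from the paper's. The paper works entirely combinatorially: it passes to the barycentric subdivision $X'$, takes the full subcomplex $Z$ of $X'$ on vertices not in $Y$, shows $X\setminus Y$ deformation retracts onto $Z$ via the star neighbourhood of $Z$ in $X'$, and then observes that the same construction applied to $\Sigma X$ and $Y$ produces exactly $\Sigma Z$. Your argument instead decomposes $\Sigma X$ into two cones glued along the equator, deletes $Y$ from the equator, shows each punctured cone is contractible, and invokes the homotopy-pushout description of the unreduced suspension. The decomposition into punctured cones and the contractibility homotopy $((x,t),s)\mapsto (x, t+s(1-t))$ are both correct.

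The weak point is the cofibration step, and I think your justification as written is too quick. Having an open product neighbourhood $(X\setminus Y)\times[0,1/2)$ of $(X\setminus Y)\times\{0\}$ inside $C_+X\setminus Y$ is not by itself ``NDR data,'' and the naive NDR structure one would try to read off from the cone coordinate --- $u(x,t)=t$ together with the homotopy $(x,t)\mapsto(x,(1-s)t)$ --- fails: the set $\{u<1\}$ contains points $(y,t)$ with $y\in Y$ and $t>0$, and pushing such a point down to $t=0$ lands on $(y,0)$, which has been removed from the space. To make the NDR pair work you have to taper the halo and the homotopy in the $Y$-direction, for instance by using a function $\phi\colon X\to[0,1]$ with $\phi^{-1}(0)=Y$ (e.g.\ a normalized distance to $Y$, which exists because $Y$ is a closed subcomplex of a finite complex), setting something like $u(x,t)=\min\bigl(1,\,t/\phi(x)\bigr)$, and only pushing $t$ down by an amount controlled by $\phi(x)$ so that points over $Y$ are never moved. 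With such a choice the Strøm criterion is satisfied and the rest of your argument goes through, so the proof is salvageable; but as stated, ``the obvious open collar\ldots\ is enough to conclude'' glosses over exactly the place where the deleted set $Y$ threatens the construction, which is the one place a referee would push back.
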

\begin{proof}
Consider the barycentric subdivision $X'$ of $X$ and the full subcomplex $Z$ in $X'$ on the vertices that do not belong to $Y$.
Define a neighbourhood $U_{X'}(Z)$ of $Z$ as the union of relative interiors of simplices of $X'$ having some simplex of $Z$ as a face.
Obviously, $Z$ is a deformation retract of $U_{X'}(Z)$. 

We claim that $X' \backslash U_{X'}(Z) = Y'$, where $Y'$ is the barycentric subdivision of $Y$.
Indeed, suppose that there is a point $x \in X'$ such that $x \notin Y'$ and consider the minimal face $F$ that contains $x$. 
Since we work with the barycentric subdivision of $X$, $F$ intersect $Y'$ by a single face.
Then there is a vertex $v \in F$ such that $v \notin Y'$, so $v \in Z$ and hence the relative interior of $F$ is a subset of $U_{X'}(Z)$.
The point $x$ belongs to the relative interior of $F$ since we took the minimal face $F$.

Thus we obtain a homotopy equivalence $X \backslash Y \simeq Z$, so $\Sigma(X \backslash Y) \simeq \Sigma Z$.
Applying the same arguments to the simplicial complex $\Sigma X$ and its subcomplex $Y$ we obtain that $(\Sigma X) \backslash Y \simeq \widetilde{Z}$,
where $\widetilde{Z}$ is the full subcomplex in the barycentric subdivision $(\Sigma X)'$ of $\Sigma X$ on the vertices that do not belong to $Y$. It remains to note that
$\widetilde{Z} = \Sigma Z$ as subcomplexes in $\Sigma X' = (\Sigma X)'$ to obtain the desired statement.
\end{proof}
\begin{theorem}\label{res2}
	Suppose that any two missing faces of simplicial complex $\K$ have a common vertex. Then there is a homotopy equivalence
	$$U(\K) \simeq \Sigma^2 D(\K).$$
	In  the case of real arrangement complements there is a homotopy equivalence 
 $$U_{\RR}(\K) \simeq \Sigma D_{\RR}(\K).$$
\end{theorem}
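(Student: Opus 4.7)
My plan is to combine Theorem~\ref{res1} with an ``ambient suspension'' consequence of Lemma~\ref{sus}. By Theorem~\ref{res1}, $U(\K)\simeq D(\mathcal{L})$, and the change of coordinates $y_{i}=z_{i}-z_{m+1}$ used in its proof identifies $U(\K)$, inside the hyperplane $\{y_{m+1}=0\}\subset\CC^{m+1}$, with the coordinate-arrangement complement $\CC^{m}\setminus\bigcup_{I\in MF(\K)}L_{I}$, where $L_{I}=\{y_{i}=0:i\in I\}$. The task is thus to show that this space is homotopy equivalent to $\Sigma^{2}D(\K)$.

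The first step I would carry out is to deform the coordinate arrangement into a diagonal-type arrangement contained in a complex hyperplane. Set $\pi_{0}=\{\sum_{i=1}^{m}y_{i}=0\}\subset\CC^{m}$ and consider the linear family $\phi_{t}(y)=y-t\bar{y}\mathbf{1}$ with $\bar{y}=\frac{1}{m}\sum_{i}y_{i}$. Each $\phi_{t}$ is an isomorphism for $t<1$, while $\phi_{1}$ is the orthogonal projection onto $\pi_{0}$; a short computation shows $\phi_{1}(L_{I})=M_{I}:=\{y_{i_{1}}=\cdots=y_{i_{k}}\}\cap\pi_{0}$. The hypothesis that any two missing faces of $\K$ share a vertex is exactly what forces $M_{I}\cap M_{J}=M_{I\cup J}$---the common vertex equates the two diagonal values---so the intersection lattice and codimensions of $\{\phi_{t}(L_{I})\}_{I}$ remain constant on $[0,1]$. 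Invoking the standard rigidity principle that continuous families of subspace arrangements of constant combinatorial type have homotopy equivalent complements, I conclude $U(\K)\simeq\CC^{m}\setminus\bigcup_{I}M_{I}$.

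The second step is an \emph{ambient-suspension} principle derived from Lemma~\ref{sus}: for any linear subspace arrangement $\mathcal{A}$ contained in a complex hyperplane $\pi_{0}\subset\CC^{m}$, one has $\CC^{m}\setminus\mathcal{A}\simeq\Sigma^{2}(\pi_{0}\setminus\mathcal{A})$. I would prove this by radially retracting both sides to their unit spheres, which gives $\CC^{m}\setminus\mathcal{A}\simeq S^{2m-1}\setminus\mathcal{A}_{S}$ and $\pi_{0}\setminus\mathcal{A}\simeq S^{2m-3}\setminus\mathcal{A}_{S}$ with $\mathcal{A}_{S}=\mathcal{A}\cap S^{2m-3}$ (which equals $\mathcal{A}\cap S^{2m-1}$ because $\mathcal{A}\subset\pi_{0}$), and then applying Lemma~\ref{sus} twice to the codimension-two equatorial inclusion $S^{2m-3}\subset S^{2m-1}$. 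Applying the principle with $\mathcal{A}=\bigcup_{I}M_{I}$, and combining with the identification of $\pi_{0}\setminus\bigcup_{I}M_{I}$ with $D(\K)\cap\pi_{0}$ (homotopy equivalent to $D(\K)$ by the diagonal-translation argument from the proof of Theorem~\ref{res1}), yields $\CC^{m}\setminus\bigcup_{I}M_{I}\simeq\Sigma^{2}D(\K)$, and chaining the equivalences gives $U(\K)\simeq\Sigma^{2}D(\K)$. The real case is handled by the same argument; the only difference is that $\pi_{0}^{\RR}\subset\RR^{m}$ has real codimension one, so a single application of Lemma~\ref{sus} suffices and one obtains $U_{\RR}(\K)\simeq\Sigma D_{\RR}(\K)$.

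The main obstacle is the deformation step. Homeomorphism of the complements is immediate for $t<1$ since each $\phi_{t}$ is a linear isomorphism, but bridging the gap to $t=1$ rests on a rigidity theorem for subspace arrangements that is not established in the paper and is genuinely stronger than Lemma~\ref{sus}. A self-contained justification would probably require either a direct simplicial or Mayer--Vietoris argument tailored to the specific family $\phi_{t}$, or an appeal to the diagrams-of-spaces technique of Ziegler--\v{Z}ivaljevi\'{c}. All the remaining ingredients---Theorem~\ref{res1}, the translation and scaling invariance of diagonal complements, and the ambient-suspension principle from Lemma~\ref{sus}---are routine once the deformation step is in place.
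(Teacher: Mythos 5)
Your overall strategy mirrors the paper's: move to the hyperplane $\pi_0=\{\sum y_i = 0\}$, show that inside $\pi_0$ the coordinate picture turns into the diagonal picture, and then apply Lemma~\ref{sus} to the equatorial sphere inclusion $S^{2m-3}\subset S^{2m-1}$ to account for the double suspension. Your ``ambient-suspension principle'' in Step~2 is exactly the chain $\CC^m\setminus\mathcal A\simeq S^{2m-1}\setminus\mathcal A_S\simeq\Sigma^2(S^{2m-3}\setminus\mathcal A_S)\simeq\Sigma^2(\pi_0\setminus\mathcal A)$ that closes the paper's proof (including the appeal to~\cite{BjZ} to realize the spherical intersection as a subcomplex). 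The preliminary detour through Theorem~\ref{res1} and $D(\mathcal L)\subset\CC^{m+1}$ is harmless but unnecessary: $U(\K)$ is already a coordinate complement in $\CC^m$, and the paper works directly there.

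The genuine gap, which you yourself flag, is the middle step. You deform the coordinate arrangement by $\phi_t(y)=y-t\bar y\mathbf 1$ and want to cross $t=1$. Each $\phi_t$ with $t<1$ is a linear isomorphism, so the complements are all homeomorphic on $[0,1)$; but there is no general rigidity theorem for subspace arrangements that lets you pass to the degenerate limit $t=1$. Randell's lattice-isotopy theorem is specific to hyperplane arrangements, and for arbitrary subspace arrangements the complement's homotopy type is not determined by the intersection lattice alone, so ``constant combinatorial type'' is not by itself an adequate hypothesis. The paper sidesteps this entirely with a different idea. Although $\phi_1=pr_\pi$ is not injective on $\CC^m$, it \emph{is} injective on the union $|\mathcal{CA}(\K)|$ precisely because of the common-vertex hypothesis: if $y\in L_I$, $y'\in L_J$ and $y-y'=c\mathbf 1$, pick $i\in I\cap J$ to get $c=y_i-y'_i=0$. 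The paper then extends this injective restriction to a homeomorphism $\Phi$ of all of $\CC^m$, by first defining the shear amount $f$ on $|\mathcal{DA}(\K)|\cap\pi$ (well-defined only because any two missing faces share a vertex), extending it to $\widehat f$ on $\CC^m$ via the Tietze theorem, and setting $\Phi(z)=z-\widehat f(z)(1,\dots,1)$. This $\Phi$ is invertible (inverse $z\mapsto z+\widehat f(z)(1,\dots,1)$), carries $|\mathcal{CA}(\K)|$ onto $|\mathcal{DA}(\K)|\cap\pi$, and hence gives the desired homeomorphism of complements directly, with no rigidity result needed. If you want to salvage your plan, replace the appeal to rigidity by this Tietze-extension construction: extend the shear on the union set to an ambient homeomorphism rather than attempting to control a one-parameter family of arrangements through the degenerate fiber.
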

\begin{proof}
Let $D(\K)$ be the complement of the diagonal arranglement $\mathcal{DA}(\K)$.
	Denote by $\pi$ the hyperplane $\{z_1 + \dots + z_m = 0 \}$. The map
	$$F(z, t) = z - t \frac{z_1 + \dots + z_m}{m} (1, \dots, 1)$$
	defines a homotopy between the identity map on $D(\K)$ and the orthogonal projection of $D(\K)$ to $D(\K) \cap \pi$.
	The map
	$$G(z, t) = (1-t) z + t \frac{z}{|z|}$$
	defines a homotopy between the identity map on $D(\K) \cap \pi$ and
	the central projection from $D(\K) \cap \pi$ to $D(\K) \cap S^{2m-3}$, where $S^{2m-3}$ is the unit sphere in $\pi$.
Hence, we obtain a homotopy equivalence %$D(\K) \simeq D(\K) \cap S^{2m-3}$.
\begin{equation}\label{dk-to-s-pi}
D(\K) \simeq D(\K) \cap S^{2m-3} = S^{2m-3} \backslash (\mathcal{DA}(\K) \cap \pi)
\end{equation}
We define a continious function $f$ on the union $|\mathcal{DA}(\K)|$ by
\[ 
  f_{i_1, \dots, i_k}(z) = \frac{z_1 + \dots + z_m }{m} - z_{i_1}\quad\text{for }z\in\{z_{i_1} = \dots = z_{i_k} \}.
\]
These functions coincide on the intersections of the diagonal  subspaces in $\mathcal{DA}(\K)$. Indeed,
	for any two non-faces $I = \{i_1, \dots, i_k\}$ and $J = \{j_1, \dots, j_l \}$ of $\K$
	the intersection of subspaces $\{z_{i_1} = \dots = z_{i_k} \}$ 
	and $\{z_{j_1} = \dots = z_{j_l} \}$ has the form $\{z_{i_1} = \dots = z_{i_k} = z_{j_1} = \dots = z_{j_l} \}$,
	because $I$ and $J$ have a common vertex. So we obtain
	$$f_{i_1, \dots, i_k}(z) = \frac{z_1 + \dots + z_m }{m} - z_{i_1} = \frac{z_1 + \dots + z_m }{m} - z_{j_1} = f_{j_1, \dots, j_l}(z)$$
	for $z \in \{z_{i_1} = \dots = z_{i_k}\} \cap \{z_{j_1} = \dots = z_{j_l}\}$. 
Hence, $f$ is well defined on $|\mathcal{DA}(\K)|$.
	
The intersection $|\mathcal{DA}(\K)| \cap \pi$ is closed in $\pi$, so the Tietze theorem implies that the function $f$ can be continuously extended to a function $\widetilde{f}$ on the whole $\pi$ and then to a function $\widehat{f}$ on $\CC^m$
by the rule
	$$\widehat{f}(z) = \widetilde{f}(pr_{\pi}(z)),$$
where the $pr_{\pi}(z)$ is the orthogonal projection on the hyperplane $\pi$.
Then we define a map $\Phi \colon \CC^m \to \CC^m$ by the formula	$$\Phi (z) = z - \widehat{f}(z) (1, \dots, 1).$$
This map is a homeomorphism with the inverse map given by
	$$\Phi^{-1}(z) = z + \widehat{f}(z) (1, \dots, 1).$$
	
Consider the coordinate arrangement $\mathcal{CA}(K)$ and its complement $U(\K)$. We claim that 
\begin{equation}\label{uk-to-s-pi}
  \Phi(U(\K)) = \CC^m \backslash \left({ \mathcal{DA}(\K) \cap \pi }\right).
\end{equation}
Indeed, the restriction of $\widehat f$ to a coordinate subspace $\{z_{i_1} = \dots = z_{i_k} = 0 \}\in\mathcal{CA}(\K)$  is equal to $\frac{z_1 + \dots + z_m}{m}$. Hence,
\[
  \Phi(z) = z - \frac{z_1 + \dots + z_m}{m} (1, \dots, 1)
  \quad\text{for } z \in \mathcal{CA}(\K),
\]  
that is, $\Phi(z) \in \pi$. Also we have that $\Phi(\mathcal{DA}(\K)) \subset \mathcal{DA}(\K)$,	because $\Phi$ translates every point $z$ by a vector collinear to $(1, \dots, 1)$.
	Hence, $\Phi(\mathcal{CA}(\K) ) \subset \mathcal{DA}(\K) \cap \pi$.
	The inverse inclusion follows by applying the same arguments to the map $\Phi^{-1}$. 
	Since $\Phi$ is a homeomorphism and
	$\Phi(\mathcal{CA}(\K)) = \mathcal{DA}(\K) \cap \pi$, we obtain
	$\Phi(U(\K)) = \CC^m \backslash \left({ \mathcal{DA}(\K) \cap \pi }\right)$.
	
	The map $G(z, t)$ defines a homotopy between the identity map on $\Phi(U(\K))$ and the central projection
	from $U(\K)$ to $\Phi(U(\K)) \cap S^{2m-1}$, where $S^{2m-1}$ is the unit sphere in $\CC^m$. 
	Thus, we obtain homotopy equivalences:
	%$U(\K) \simeq S^{2m-1} \backslash \left({\mathcal{DA}(\K) \cap \pi }\right).$
\begin{multline*}
    \Sigma^2 D(\K) \simeq \Sigma^2(S^{2m-3} \backslash
    (\mathcal{DA}(\K) \cap \pi))
    \simeq S^{2m-1} \backslash(\mathcal{DA}(\K) \cap \pi)
    \simeq \CC^m \backslash (\mathcal{DA}(\K) \cap \pi) \cong U(\K).
\end{multline*}
Here the first homotopy equivalence follows from~\eqref{dk-to-s-pi}. The second equivalence follows Lemma~\ref{sus} (by the result of~\cite{BjZ}, the intersection %complement
of a subspace arrangement with a sphere can be viewed as a simplicial subcomplex). The last homeomorphism above is~\eqref{uk-to-s-pi}.
	
For real arrangement complements the argument is the same.
\end{proof}

Using Theorem~\ref{res2} we can reproduce some calculations of~\cite{BjW} for the cohomology groups of ``$k$-equal arrangement'' complements:
\begin{proposition}[\cite{BjW}, Theorem 1.1, 1.2]
	Suppose that $\K = \sk^{k-2} \Delta^m$, $k < m < 2k$. Then $D_\RR(\K)$ and $D(\K)$ are the real and complex ``$k$-equal arrangement'' complements, respectively.
 Their cohomology groups are given by
 \begin{align*}
		H^q(D_\RR(\K))& = 
		\begin{cases}
			\ZZ, &\text{if } q = 0,\\
			\ZZ^s, &\text{if } q = k-2, \text { where } s = \sum_{l = k}^{m} {{m} \choose l} {{l-1} \choose {k - 1}},\\
			0, &\text{otherwise},
		\end{cases}\\
H^q(D(\K)) &= 
		\begin{cases}
			\ZZ, &\text{if } q = 0,\\
			\ZZ^{t(q)}, &\text{if } 2k - 3 \le q \le m + k - 3, \text{ where }
   t(q) = {m \choose q-k+1} {q-k \choose k-1},\\
			0, &\text{otherwise},
		\end{cases}
\end{align*}
	%where $t(q) = {m \choose q-k+1} {q-k \choose k-1}$.
\end{proposition}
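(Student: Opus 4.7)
The plan is to combine Theorem~\ref{res2} with formula~(\ref{subcompl}) of Proposition~\ref{GorMac2} and the classical computation of the cohomology of the skeleton of a simplex.

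First I would verify the hypothesis of Theorem~\ref{res2}. A subset $I\subset[m]$ fails to be a face of $\sk^{k-2}\Delta^m$ precisely when $|I|\ge k$, and the minimal such subsets are the $k$-element ones; hence the missing faces of $\K$ are exactly the $k$-subsets of $[m]$. Under the assumption $m<2k$, any two $k$-subsets $I,J$ of $[m]$ satisfy $|I\cap J|\ge 2k-m\ge 1$, so the hypothesis of Theorem~\ref{res2} is in force. Moreover this description of $\mathcal{DA}(\K)$ (and of its real analog) identifies $D(\K)$ and $D_\RR(\K)$ with the complex and real $k$-equal arrangement complements.

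Next I would identify the full subcomplexes $\K_I$ that appear in formula~(\ref{subcompl}). For $I\subset[m]$ with $|I|=l$, the complex $\K_I$ is the $(k-2)$-skeleton of the simplex on $I$: contractible when $l\le k-1$ and homotopy equivalent to a wedge of $\binom{l-1}{k-1}$ copies of $S^{k-2}$ when $l\ge k$. Hence the summand $\widetilde H^{q-l+1}(\K_I)$ in~(\ref{subcompl}) can contribute only when $l\ge k$ and $q-l+1=k-2$. Summing over the $\binom{m}{l}$ subsets of size $l=q-k+3$, and using $k\le l\le m$ to obtain the range $2k-3\le q\le m+k-3$, produces the stated Betti number in each allowed degree.

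For the real case I would pass through Theorem~\ref{res2} in the form $U_\RR(\K)\simeq\Sigma D_\RR(\K)$ and invoke the Hochster-type decomposition
$$\widetilde H^q(U_\RR(\K))\cong\bigoplus_{\varnothing\neq I\subset[m]}\widetilde H^{q-1}(\K_I),$$
a standard fact for real coordinate-arrangement complements (equivalently, for real moment-angle complexes). Desuspending yields $\widetilde H^q(D_\RR(\K))\cong\bigoplus_I\widetilde H^q(\K_I)$, which is concentrated in degree $k-2$ and sums to $s=\sum_{l=k}^{m}\binom{m}{l}\binom{l-1}{k-1}$. The main obstacle throughout is essentially bookkeeping: tracking the several dimension and degree shifts coming from Goresky--MacPherson, the de~Longueville--Schultz identification, Alexander duality, and the suspension isomorphisms; once the indexing is set up correctly the proof reduces to binomial arithmetic.
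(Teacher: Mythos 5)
Your proof is correct, but it takes a genuinely different route from the paper's. The paper simply quotes from~\cite{GT} the explicit wedge--of--spheres decompositions
$U_\RR(\K) \simeq \bigvee_{l = k}^m (S^{k-1})^{\vee \binom{m}{l}\binom{l-1}{k-1}}$ and
$U(\K) \simeq \bigvee_{l = k}^m (S^{k + l - 1})^{\vee \binom{m}{l}\binom{l-1}{k-1}}$,
and then reads off $H^*(D_\RR(\K))$ and $H^*(D(\K))$ from Theorem~\ref{res2} via the suspension isomorphism.
You instead work intrinsically with the paper's own Goresky--MacPherson/Hochster-type formula~\eqref{subcompl} for $D(\K)$, after first verifying the hypothesis of Theorem~\ref{res2} (which the paper leaves implicit). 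The key observation in both cases is the same: for $|I|=l$ one has $\K_I = \sk^{k-2}\Delta^{l-1}$, which is contractible when $l\le k-1$ and a wedge of $\binom{l-1}{k-1}$ copies of $S^{k-2}$ when $l\ge k$. Your route is more self-contained (it does not require the reference~\cite{GT}), at the cost of having to invoke the real Hochster decomposition for $U_\RR(\K)$ as a ``standard fact''; the paper's route avoids that by outsourcing the computation of $U(\K)$, $U_\RR(\K)$ entirely.

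One thing you should flag rather than gloss over: your computation for the complex case gives the Betti number $\binom{m}{q-k+3}\binom{q-k+2}{k-1}$ in degree $q$, obtained from $l=q-k+3$. This does \emph{not} literally match the $t(q)=\binom{m}{q-k+1}\binom{q-k}{k-1}$ printed in the proposition, so it is not accurate to say it ``produces the stated Betti number''. Your value is the correct one: reading off cohomology from the~\cite{GT} wedge together with $U(\K)\simeq\Sigma^2 D(\K)$ also forces $l=q-k+3$ (and gives the same range $2k-3\le q\le m+k-3$, which the paper does state correctly). The printed $t(q)$ appears to be the formula for $\widetilde H^q(U(\K))$ with the double-suspension shift of~$2$ in~$q$ omitted. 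You should note this discrepancy explicitly rather than assert agreement.
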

\begin{proof}
In~\cite{GT}, the following homotopy equivalences were proven:
	$$U_\RR(\K) \simeq \bigvee_{l = k}^m (S^{k-1})^{\vee {m \choose l} {l-1 \choose k-1}},\qquad 
 U(\K) \simeq \bigvee_{l = k}^m (S^{k + l - 1})^{\vee {m \choose l} {l-1 \choose k-1}}.$$
Now the statement follow from Theorem~\ref{res2} and the suspension isomorphism.
\end{proof}

\section{BBCG-decomposition and Golod complexes}
We refer to the stable decomposition of the polyhedral product described in the next theorem as the \emph{BBCG-decomposition}:
\begin{theorem}[\cite{BBCG}, Theorem 2.21]
	Let $\K$ be a simplicial complex on $[m]$, and the pairs of spaces $(\textbf X, \textbf A)$ have the property 
	that $X_i$ is contractible for all $i = 1, 2, \dots, m$.
	Then there is a homotopy equivalence.
	$$\Sigma (\textbf X, \textbf A)^\K \simeq \Sigma \bigvee_{\varnothing \neq I \subset [m]} |\K_I| * \widehat A^I.$$
\end{theorem}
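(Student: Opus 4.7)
The plan is to deduce the statement from the classical stable splitting of a finite product of well-pointed spaces,
$$\Sigma(Y_1 \times \cdots \times Y_m) \simeq \bigvee_{\varnothing \neq I \subset [m]} \Sigma\, \widehat{Y}^I,$$
together with an analysis of how the strata of the polyhedral product distribute among the wedge summands after a single suspension.

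First, I would reduce to a convenient model. Since each $X_i$ is contractible and contains $A_i$ as a subspace, I can replace the pair $(X_i, A_i)$ up to homotopy of pairs by the cone pair $(CA_i, A_i)$. This puts us in the setting of based CW pairs in which each inclusion $A_i \hookrightarrow X_i$ is a cofibration with a well-behaved basepoint, which is exactly what is needed to make the stable product splitting above natural with respect to the inclusion $(\textit{\textbf X}, \textit{\textbf A})^\K \hookrightarrow X_1 \times \cdots \times X_m$.

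Second, I would apply the stable splitting to the ambient product and trace where the polyhedral product lands. For each $\varnothing \neq I \subset [m]$ the summand $\Sigma\,\widehat{X}^I$ corresponds to the subspace of $\prod_i X_i$ whose $j$-th coordinate is at the basepoint for every $j \notin I$. Intersecting this subspace with $(\textit{\textbf X},\textit{\textbf A})^\K$ selects exactly those simplices $\sigma \in \K$ that are contained in $I$, and therefore yields the polyhedral product for the restricted pairs $(X_i, A_i)_{i \in I}$ over the full subcomplex $\K_I$. Thus after one suspension the polyhedral product decomposes as a wedge indexed by $\varnothing \neq I \subset [m]$, with the $I$-th summand being the image of the restricted polyhedral product in the smash product $\bigwedge_{i \in I} X_i$.

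Third, I would identify this image with $|\K_I| * \widehat{A}^I$. In the cone model the restricted polyhedral product has the explicit description
$$\bigcup_{\sigma \in \K_I}\; \prod_{i \in \sigma} CA_i \times \prod_{i \in I \setminus \sigma} A_i,$$
and projecting to the smash product assembles the cone coordinates into the join with $\widehat{A}^I$ via the standard description of the join as a quotient of a product of cones. The main obstacle is precisely this last identification: one must verify that the colimit over faces of $\K_I$ really produces $|\K_I| * \widehat{A}^I$ rather than some smaller quotient, and that the natural comparison map is a homotopy equivalence. This reduces to a combinatorial-topological check using the simplicial structure of $\K_I$ together with the cofibration properties secured in the first step.
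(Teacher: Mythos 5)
This theorem is imported by the paper as a citation from~\cite{BBCG}; the paper does not give its own proof, so the right comparison is with the argument in the BBCG source. Your high-level strategy---replace $(X_i,A_i)$ by the cone pair $(CA_i,A_i)$, split the ambient product stably, restrict to the polyhedral product, then identify the restricted smash summands with $|\K_I|\ast\widehat A^I$---is indeed the outline of the BBCG proof (their Theorem~2.10 gives the general splitting into smash polyhedral products $\widehat Z(\K_I)$, and Theorem~2.21 specializes via an identification of $\widehat Z(\K_I)$ with the join).

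However, your second and third steps contain a genuine gap. The stable splitting of the ambient product, $\Sigma(Y_1\times\cdots\times Y_m)\simeq\bigvee_I\Sigma\widehat Y^I$, is not compatible with passage to arbitrary subspaces. Your phrase ``trace where the polyhedral product lands'' quietly assumes that because the composite $\Sigma(\textbf X,\textbf A)^\K\to\Sigma\prod X_i\to\bigvee_I\Sigma\widehat X^I$ factors (summand by summand) through $\bigvee_I\Sigma\widehat Z(\K_I)$, the resulting map is a homotopy equivalence. That is precisely what has to be proved, and it is false for a general subspace of a product. The BBCG proof establishes it by constructing the comparison map directly from the quotient maps to smash polyhedral products and then proving it is a homology isomorphism via the pushout/colimit description of the polyhedral product, an inductive K\"unneth-type computation, and Whitehead's theorem; no such verification appears in your proposal. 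Similarly, your description of the $I$-th summand as ``the subspace of $\prod X_i$ with $j$-th coordinate at basepoint for $j\notin I$'' conflates a retract obtained by collapsing with an honest subspace; the summand lives in a smash product quotient, not a product subspace, and this distinction is where the work is. Finally, the identification $\widehat Z(\K_I;(CA_I,A_I))\simeq|\K_I|\ast\widehat A^I$ in your last step is itself a separate nontrivial lemma in BBCG (a colimit computation together with a cofibration argument); you correctly flag it as the ``main obstacle'' but do not supply it. So while your skeleton matches the source, the two load-bearing steps are asserted rather than proved.
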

In the particular case of $\Z_\K=(D^2,S^1)^\K$ we obtain
$$\Sigma \Z_\K \simeq \bigvee_{\varnothing \neq I \subset [m]} \Sigma^{|I| + 2}|\K_I|.$$

Necessary and sufficient conditions for desuspending the BBCG-decomposition were studied in \cite{IK}. One of the main results is the following:
%theorem.
\begin{theorem}[\cite{IK}, Theorem 1.3]\label{Kis}
	The following conditions are equivalent:
	\begin{enumerate}
		\item the fat wedge filtration of $\Z_\K$ is trivial;
		\item $\Z_\K$ is a co-$H$-space;
		\item there is a homotopy equivalence
			$$\Z_\K \simeq \bigvee_{\varnothing \neq I \subset [m]} \Sigma^{|I| + 1}|\K_I|.$$
	\end{enumerate}
\end{theorem}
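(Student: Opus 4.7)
My plan is to establish the cycle of implications $(3)\Rightarrow(2)\Rightarrow(1)\Rightarrow(3)$. The easiest implication is $(3)\Rightarrow(2)$: a wedge of suspensions is a co-$H$-space, so if $\Z_\K$ decomposes as such a wedge up to homotopy equivalence, it inherits a co-$H$-space structure.

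For the other two implications the plan is to work with a concrete model of the fat wedge filtration of $\Z_\K$. I would define, for $0\le i\le m$, the subspace $\Z_\K^i\subset\Z_\K$ corresponding to the intersection with the $i$th fat wedge filtration of $(D^2)^m$, namely the points $(z_1,\dots,z_m)\in\Z_\K$ at least $m-i$ of whose coordinates lie in the basepoint $*=1\in S^1$. Using the polyhedral product description one identifies each quotient $\Z_\K^i/\Z_\K^{i-1}$ as a wedge of spaces of the form $\Sigma^{|I|+1}|\K_I|$ with $|I|=i$, and one identifies $\Z_\K^i$ inductively as the mapping cone of a map
\begin{equation*}
  \varphi_i\colon\bigvee_{|I|=i}\Sigma^{|I|}|\K_I|\longrightarrow \Z_\K^{i-1}.
\end{equation*}
The filtration is called trivial when every $\varphi_i$ is null-homotopic.

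For $(1)\Rightarrow(3)$ I would then argue by induction on $i$. Given $\Z_\K^{i-1}\simeq\bigvee_{\varnothing\neq J,\,|J|<i}\Sigma^{|J|+1}|\K_J|$ and $\varphi_i$ null-homotopic, the mapping cone $\Z_\K^i$ splits as $\Z_\K^{i-1}\vee\bigvee_{|I|=i}\Sigma^{|I|+1}|\K_I|$, and after $m$ steps we reach the claimed decomposition of $\Z_\K=\Z_\K^m$. The base case $\Z_\K^0=*$ is immediate, and the identification of the cofibre at each step follows from the polyhedral product description together with the smash-product formula $\widehat A^I=S^{|I|}$ in the case $(D^2,S^1)$, which matches the factor $\Sigma^{|I|}|\K_I|$ appearing in the BBCG stable decomposition.

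The main obstacle is the implication $(2)\Rightarrow(1)$: here I would exploit the co-$H$ structure to show each $\varphi_i$ is null-homotopic. The plan is to chase the comultiplication $\Z_\K\to\Z_\K\vee\Z_\K$ through the natural retractions onto the wedge summands provided by the BBCG \emph{stable} splitting of $\Sigma\Z_\K$: this produces a coaction of the stable summands on $\Z_\K$, and one then shows that $\varphi_i$ lifts, after composition with the coaction, to a map into a higher-connected wedge summand where it must be null for dimension reasons. Making this precise requires a careful inductive argument showing that each stage $\Z_\K^{i-1}$ retains enough co-$H$-structure for the argument to proceed, and it is there that the bulk of the technical work in \cite{IK} lies.
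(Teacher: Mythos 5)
This is a result the paper quotes from Iriye and Kishimoto, labelled \cite[Theorem~1.3]{IK}; the paper itself gives no proof, so there is no in-paper argument to compare against. Your proposal is therefore best judged as a reconstruction of the argument in \cite{IK}. Your implication $(3)\Rightarrow(2)$ is fine, and your description of the fat wedge filtration of $\Z_\K$ (points with at least $m-i$ coordinates at the basepoint of $S^1$) is the correct model.

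The real content, however, is smoothed over in two places. First, for $(1)\Rightarrow(3)$ you assert that $\Z_\K^i$ is inductively a mapping cone of a map $\varphi_i\colon\bigvee_{|I|=i}\Sigma^{|I|}|\K_I|\to\Z_\K^{i-1}$ and that this ``follows from the polyhedral product description together with the smash-product formula.'' That is not an observation; it is one of the central theorems of \cite{IK}. What is elementary is that the cofibre $\Z_\K^i/\Z_\K^{i-1}$ is the wedge of smash polyhedral products $\widehat{\Z}_{\K_I}\simeq\Sigma^{|I|+1}|\K_I|$; the assertion that the \emph{inclusion} $\Z_\K^{i-1}\hookrightarrow\Z_\K^i$ is a cofibration arising from a single attaching map off a wedge of desuspensions of these cofibres, so that triviality of the filtration can even be \emph{stated} as ``all $\varphi_i$ null,'' requires the cone-decomposition theorem of Iriye--Kishimoto, which they first establish for $\R_\K$ and then transfer to $\Z_\K$ by a half-smash construction. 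Without that theorem your induction has no starting structure to run on.

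Second, and more seriously, the sketch of $(2)\Rightarrow(1)$ is not a proof and the proposed mechanism is wrong. You claim that the co-$H$ structure lets one ``chase the comultiplication'' so that each $\varphi_i$, after composing with a coaction, lands in a higher-connected summand and ``must be null for dimension reasons.'' There is no such dimension argument available: the source $\bigvee_{|I|=i}\Sigma^{|I|}|\K_I|$ and the target $\Z_\K^{i-1}$ are both spaces with homology in the same range of degrees, and in general a co-$H$-space structure on the total space gives you no control over the attaching maps of an arbitrary filtration. The actual argument in \cite{IK} hinges on a specific structural property of the maps $\varphi_i^\K$ (they desuspend/factor in a controlled way through the lower filtration, via the explicit combinatorial description of the attaching maps of $\R_\K$), combined with the fact that a co-$H$-space is a retract of $\Sigma\Omega$ of itself; it is genuinely the hard part of their Theorem~1.3 and cannot be replaced by a connectivity estimate. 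As written, your $(2)\Rightarrow(1)$ has a gap that would not survive contact with an example.
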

We use this statement together with Theorem~\ref{res2} to obtain the decomposition of $\Sigma^2 D(\K)$.
\begin{proposition}\label{bbcg-dk}
	Suppose that any two missing faces of simplicial complex $\K$ on $[m]$ have a common vertex. Then the fat wedge filtration
	of $\Z_\K$ is trivial and there is a homotopy equivalence
	$$\Sigma^2 D(\K) \simeq \bigvee_{\varnothing \neq I \subset [m]} \Sigma^{|I| + 1} |\K_I|.$$
\end{proposition}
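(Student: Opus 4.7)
The plan is to combine Theorem~\ref{res2} with the characterization of the triviality of the fat wedge filtration of $\Z_\K$ given in Theorem~\ref{Kis}. The key observation is that under the hypothesis on $\K$, the space $\Sigma^2 D(\K)$ is automatically a co-$H$-space (being in particular a suspension), and by Theorem~\ref{res2} it is homotopy equivalent to $U(\K)$, which in turn is homotopy equivalent to $\Z_\K$ by the deformation retraction quoted in the preliminaries (Theorem~4.7.5 of \cite{BP}). So condition (2) of Theorem~\ref{Kis} holds essentially for free.

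More precisely, the steps are as follows. First, I would record the chain of homotopy equivalences
$$\Z_\K \;\simeq\; U(\K) \;\simeq\; \Sigma^2 D(\K),$$
where the first equivalence is Theorem~4.7.5 of \cite{BP} and the second is Theorem~\ref{res2}, which applies because $\K$ satisfies the hypothesis on missing faces. Since the right-hand side is a suspension, it admits a co-$H$-space structure, and hence so does $\Z_\K$.

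Next, I would apply Theorem~\ref{Kis}. The implication (2)$\Rightarrow$(1) gives that the fat wedge filtration of $\Z_\K$ is trivial, which is the first conclusion of the proposition. The implication (2)$\Rightarrow$(3) provides a homotopy equivalence
$$\Z_\K \;\simeq\; \bigvee_{\varnothing \neq I \subset [m]} \Sigma^{|I| + 1} |\K_I|.$$
Combining this with $\Z_\K \simeq \Sigma^2 D(\K)$ from the first step yields the desired decomposition
$$\Sigma^2 D(\K) \;\simeq\; \bigvee_{\varnothing \neq I \subset [m]} \Sigma^{|I| + 1} |\K_I|.$$

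There is essentially no technical obstacle here: the proof is a short synthesis of two deep results stated above. The only points that merit explicit mention are (i) verifying that the hypothesis of Theorem~\ref{res2} coincides with the hypothesis of the proposition (it does, verbatim), and (ii) noting that a suspension $\Sigma Y$ carries a natural co-$H$-space structure given by the pinch map, so any space homotopy equivalent to such a suspension is a co-$H$-space; this legitimizes invoking condition (2) of Theorem~\ref{Kis} for $\Z_\K$.
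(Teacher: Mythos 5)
Your proposal is correct and follows exactly the paper's own argument: deduce from Theorem~\ref{res2} that $\Z_\K \simeq \Sigma^2 D(\K)$ is a co-$H$-space, then apply Theorem~\ref{Kis} to get both the triviality of the fat wedge filtration and the desuspended BBCG-decomposition. You simply spell out a couple of small steps (the equivalence $\Z_\K\simeq U(\K)$ and the pinch-map co-$H$ structure) that the paper leaves implicit.
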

\begin{proof}
	Theorem~\ref{res2} implies that $\Z_\K \simeq \Sigma^2D(\K)$ is a co-H-space. Then Theorem~\ref{Kis} implies that the fat wedge filtration of $\Z_\K$ is trivial
	and the BBCG-decomposition of the moment-angle complex can be desuspended.
\end{proof}
In the general case it is impossible to desuspend the above decomposition and describe the homotopy type of $D(\K)$. The following example demonstrates such a case.
\begin{example}
Let $\K$ be the $6$-vertex triangulation of $\RR P^2$ from Example~\ref{D-RP2}. By the result of~\cite{GPTW}, the BBCG-decomposition
	can be desuspended in this case and we obtain:
	$$\Z_\K \simeq \bigvee_{\varnothing \neq I \subset [6]} \Sigma^{|I| + 1}|\K_I| 
	\simeq (S^5)^{\vee 10} \vee (S^6)^{\vee 15} \vee (S^7)^{\vee 6} \vee \Sigma^7(\RR P^2).$$
Since any two missing faces in $\K$ have a common vertex, Proposition~\ref{bbcg-dk} implies that $\Sigma^2 D(\K)\simeq U(\K)\simeq\Z_\K$ is a wedge of suspensions. However, this decomposition cannot be desuspended, because the multiplication in the ring $H^*(D(\K))$ is nontrivial as shown by Example~\ref{D-RP2}.
\end{example}

Recall that a simplicial complex $\K$ on the set $[m]$ is called \textit{Golod} over $\kk$, if all products and (higher) Massey products in $\Tor^*_{\kk [v_1, \dots, v_m]} (\kk [\K], \kk)$ vanish.

\begin{proposition}\label{Gol}
	Let $\K$ be a simplicial complex such that any two its missing faces have a common vertex. Then
	$\K$ is a Golod complex.
\end{proposition}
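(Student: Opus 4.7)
The plan is to combine Theorem~\ref{res2} with the deformation retraction $U(\K)\simeq\Z_\K$ from Theorem~4.7.5 (quoted in the excerpt from \cite{BP}) and the Buchstaber--Panov isomorphism
\[
  H^*(\Z_\K;\kk)\cong\Tor^*_{\kk[v_1,\dots,v_m]}(\kk[\K],\kk),
\]
which is an isomorphism of graded algebras that moreover respects higher Massey products. The first step is the easy one: Theorem~\ref{res2} and the equivalence $U(\K)\simeq\Z_\K$ together give $\Z_\K\simeq\Sigma^2 D(\K)$, so the moment-angle complex is a double suspension, and in particular a suspension and a co-H-space.

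The second step is to invoke the standard topological fact that every space $X$ homotopy equivalent to a suspension has trivial cup products and trivial higher Massey products in $\widetilde H^*(X;\kk)$. For cup products this is immediate from the co-H-space coproduct $X\to X\vee X$, which makes the diagonal null-homotopic; for higher Massey products one uses that a suspension is formal with zero cohomology ring, equivalently its cochain DGA is quasi-isomorphic to its cohomology equipped with the zero multiplication, so there are no defining systems producing a nonzero Massey product. Alternatively, one can invoke the Iriye--Kishimoto characterisation \cite{IK} used already in Proposition~\ref{bbcg-dk}, which reduces ``$\Z_\K$ is a co-H-space'' to a desuspended wedge decomposition, from which the vanishing of all products and higher Massey products in $H^*(\Z_\K;\kk)$ is read off directly.

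Combining these two ingredients, all products and higher Massey products in $H^*(\Z_\K;\kk)\cong\Tor^*_{\kk[v_1,\dots,v_m]}(\kk[\K],\kk)$ vanish, which is the definition of $\K$ being Golod over $\kk$. Since the whole argument is coefficient-free (Theorem~\ref{res2} is an equivalence of spaces, and the Buchstaber--Panov isomorphism holds over any coefficient ring), this gives Golodness over an arbitrary ring $\kk$.

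The only genuinely non-routine input is the vanishing of the higher Massey products on a suspension: triviality of the ordinary cup product follows at once from the co-H-space structure, but the Massey-product statement requires either a cochain-level formality argument for $\Sigma W$ or an appeal to an external theorem (such as the one underlying \cite{IK}) in order to upgrade ``co-H-space'' to ``Golod''. Once this is invoked, the rest of the proof is a short chain of the cited equivalences.
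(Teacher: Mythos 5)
Your argument is the same as the paper's: combine Theorem~\ref{res2} with $U(\K)\simeq\Z_\K$ to see that $\Z_\K$ is a double suspension, then invoke the fact that a suspension has vanishing cup products and higher Massey products in reduced cohomology, transported through $H^*(\Z_\K;\kk)\cong\Tor^*_{\kk[v_1,\dots,v_m]}(\kk[\K],\kk)$. The only difference is that you spell out the Massey-product vanishing for suspensions (and note it is the genuinely non-routine step), whereas the paper states it without elaboration; the approach is identical.
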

\begin{proof}
By Theorem~\ref{res2}, the moment-angle complex $\mathcal Z_\K$ is the double suspension of $D(\K)$.
Therefore, all products and higher Massey products in $H^*(\Z_\K) \cong \Tor^*_{\kk [v_1, \dots, v_m]}(\kk [\K], \kk)$ are trivial, so $\K$ is Golod.
\end{proof}
\begin{remark}
	This class of simplicial complexes includes the simplicial complexes described in Corollary~\ref{res1.5}.
\end{remark}

In \cite{IK}, two classes of simplicial complexes were constructed for which the fat wedge filtration
of the moment-angle complex is trivial. These are the \emph{homology fillable} and $\lceil \frac{\dim \K}{2} \rceil$-\emph{neighbourly} complexes. However, neither of this classes contains the class of complexes $\K$ in which any two missing faces have a common vertex. This is illustrated by the next example.

\begin{example}
Let  $\K$ be the simplicial complex on seven vertices with the missing faces given by
	\begin{eqnarray*}
		MF(\K) = \{ \{1, 2, 3, 7\}, \{1, 2, 4, 7\}, \{1, 3, 5, 7\}, \{1, 4, 6, 7\}, \{1, 5, 6, 7 \},\\
		\{2, 3, 6, 7 \}, \{2, 4, 5, 7\}, \{2, 5, 6, 7\}, \{3, 4, 5, 7\}, \{3, 4, 6, 7\} \}
	\end{eqnarray*}
	This simplicial complex can be obtained from the complex of Example~\ref{D-RP2} using the operation (\ref{from-d-to-u}).
    Then $\K$ is homotopy equivalent to the suspension of $\RR P^2$,
    %which
    so $\Sigma \K$ is not a wedge of spheres. 
    Hence, $\K$ is not homology fillable. We also have that $\dim \K = 5$, but $\K$ is not 3-neighbourly. Hence, $\K$ does not belong to either of the classes described in~\cite{IK}.
	Nevertheless, Proposition~\ref{bbcg-dk} and Proposition~\ref{Gol} imply that $\K$ is a Golod complex and that fat wedge filtration of $\Z_\K$ is trivial.
	
	As we can see, the BBCG-decomposition of moment-angle complex has the form
	\begin{equation}\label{bbcg-example}
		\Z_\K \simeq (S^7)^{\vee 10} \vee (S^8)^{\vee 15} \vee (S^9)^{\vee 6} \vee \Sigma^9(\RR P^2).
	\end{equation}
	Theorem~\ref{res1} implies that $D(\K) \simeq U(\mathcal{L})$, where $\mathcal{L}$ is the 6-vertex triangulation of projective plane.
	Thus, we can desuspend (\ref{bbcg-example}) and obtain the following decomposition for the diagonal arrangement complement:
	$$D(\K) \simeq (S^5)^{\vee 10} \vee (S^6)^{\vee 15} \vee (S^7)^{\vee 6} \vee \Sigma^7(\RR P^2).$$
\end{example}

\bibliographystyle{plain}
\bibliography{mybibliography}

\begin{thebibliography}{10}

\bibitem{Arn69}
V.~I. Arnol'd.
\newblock The cohomology ring of the colored braid group.
\newblock {\em Math. Notes}, 5:138--140, 1969.

\bibitem{BBCG}
A.~Bahri, M.~Bendersky, F.R. Cohen, and S.~Gitler.
\newblock The polyhedral product functor: A method of decomposition for
  moment-angle complexes, arrangements and related spaces.
\newblock {\em Advances in Mathematics}, 225(3):1634--1668, 2010.

\bibitem{BjW}
A.~Bjorner and V.~Welker.
\newblock The homology of "k-equal" manifolds and related partition lattices.
\newblock {\em Advances in Mathematics}, 110(2):277--313, 1995.

\bibitem{BjZ}
A.~Bj{\"o}rner and G.~M. Ziegler.
\newblock Combinatorial stratification of complex arrangements.
\newblock {\em J. Am. Math. Soc.}, 5(1):105--149, 1992.

\bibitem{BP00}
V.~M. Buchstaber and T.~E. Panov.
\newblock Torus actions, combinatorial topology, and homological algebra.
\newblock {\em Russian Math. Surveys}, 55:825--921, 2000.

\bibitem{BP}
V.~M. Buchstaber and T.~E. Panov.
\newblock {\em Toric topology}, volume 204 of {\em Math. Surv. Monogr.}
\newblock Providence, RI: American Mathematical Society (AMS), 2015.

\bibitem{dLS}
M.~de~Longueville and C.~Schultz.
\newblock The cohomology rings of complements of subspace arrangements.
\newblock {\em Mathematische Annalen}, 319:625--646, 2001.

\bibitem{Dobr}
N.~Dobrinskaya.
\newblock Loops on polyhedral products and diagonal arrangements.
\newblock {\em arXiv: Algebraic Topology}, 2009.

\bibitem{GM}
M.~Goresky and R.~MacPherson.
\newblock {\em Stratified Morse Theory}.
\newblock Ergebnisse der Mathematik und ihrer Grenzgebiete. 3. Folge / A Series
  of Modern Surveys in Mathematics. Springer Berlin Heidelberg, 1988.

\bibitem{GPTW}
J.~Grbić, T.~Panov, S.~Theriault, and Jie Wu.
\newblock The homotopy types of moment-angle complexes for flag complexes.
\newblock {\em Trans. Amer. Math. Soc.}, 368:6663--6682, 2016.

\bibitem{GT}
J.~Grbić and S.~Theriault.
\newblock The homotopy type of the complement of a coordinate subspace
  arrangement.
\newblock {\em Topology}, 46(4):357--396, 2007.

\bibitem{IK}
K.~Iriye and D.~Kishimoto.
\newblock {Fat-wedge filtration and decomposition of polyhedral products}.
\newblock {\em Kyoto Journal of Mathematics}, 59(1):1 -- 51, 2019.

\bibitem{PRW}
I.~Peeva, V.~Reiner, and V.~Welker.
\newblock Cohomology of real diagonal subspace arrangements via resolutions.
\newblock {\em Compositio Mathematica}, 117(1):107–123, 1999.

\end{thebibliography}
\end{document}